\DeclareMathOperator{\conv}{conv}
\DeclareMathOperator{\sgn}{sgn}
\def\typeI{type~I\xspace}
\def\TypeI{Type~I\xspace}
\def\typeIa{type~I$_\mathrm{a}$\xspace}
\def\TypeIa{Type~I$_\mathrm{a}$\xspace}
\def\typeIb{type~I$_\mathrm{b}$\xspace}
\def\typeII{type~II\xspace}
\def\XX{\mathcal{X}}
\def\RR{\mathbb{R}}
\def\AA{\mathcal{A}}
\def\FF{\mathcal{F}}
\newtheorem{theorem}{Theorem}
\newtheorem{lemma}[theorem]{Lemma}
\newtheorem{corollary}[theorem]{Corollary}
\newtheorem{question}{Question}
\newtheorem{proposition}{Proposition}
\date{}
\title{Topological Drawings meet Classical Theorems from Convex~Geometry\footnote{%
We thank Alan Arroyo, Emo Welzl, Heiko Harborth, and Geza T\'oth for inspiring discussions and the reviewers for helpful comments.
A special thanks goes to Patrick Schnider
for his simplification of the construction 
in the proof of Proposition~\ref{theorem:helly_fconvex}.
R.\ Steiner and H.\ Bergold were funded by DFG-GRK~2434. 
S.\ Felsner and M.\ Scheucher were supported by the DFG Grant FE~340/12-1.
M.\ Scheucher was supported by the DFG Grant SCHE~2214/1-1 and
by the internal research funding ``Post-Doc-Funding'' from Technische Universit\"at Berlin.
}
\footnote{The reverse direction of Theorem~\ref{theorem:kirchberger} does not hold. 
The comment directly after the theorem is now corrected and a counterexample is presented. }
}
\def\inst#1{$^{#1}$}
\begin{document}

\author{
Helena Bergold\inst{1}
\and
Stefan Felsner\inst{2}
\and
Manfred Scheucher\inst{2}
\and
Felix Schr\"oder\inst{2}
\and
Raphael Steiner\inst{2}
}

\maketitle

\begin{center}
{\footnotesize
\inst{1} 
Fachbereich Mathematik und Informatik, \\
Freie Universit\"at Berlin, Germany,\\
\texttt{helena.bergold@fu-berlin.de}
\\\ \\
\inst{2} 
Institut f\"ur Mathematik, \\
Technische Universit\"at Berlin, Germany,\\
\texttt{\{felsner,scheucher,fschroed,steiner\}@math.tu-berlin.de}
\\\ \\
}
\end{center}


\begin{abstract}
In this article we discuss classic theorems 
from Convex Geometry 
in the context of topological drawings and beyond.
In a simple topological drawing of the complete graph~$K_n$, 
any two edges share at most one point:
either a common vertex or a point where they cross.
Triangles of simple topological drawings can be
viewed as convex sets. This gives a link to convex geometry.

As our main result, we present a generalization of Kirchberger's theorem that is of purely combinatorial nature. It turned out that this classic theorem also applies to ``generalized signotopes'' -- a combinatorial generalization
of simple topological drawings, which we introduce and investigate in the course of this article. 
As indicated by their name they are
a generalization of signotopes, a structure studied in the context of encodings for arrangements of pseudolines.

We also present a family of simple topological drawings
with arbitrarily large Helly number, and a new proof of a topological
generalization of Carath\'{e}odory's theorem in the plane and discuss further
classic theorems from Convex Geometry in the context of simple
topological drawings.

\end{abstract}




\section{Introduction}
\label{sec:Introduction}

A set of $n$ points in the plane (in general position)
induces a straight-line drawing of the complete graph~$K_n$.
In this article we investigate simple topological drawings of $K_n$
and use triangles spanned by 3~points of such drawings to generalize and study classic
problems from the convex geometry of point sets.
Since we only deal with simple topological drawings we omit the
attribute \emph{simple} and define
a \emph{topological drawing} $D$ of $K_n$ in the plane (or on the sphere) as follows:

\goodbreak{}
\begin{itemize}
\item 
vertices are mapped to distinct points in the plane (on the sphere), 
\item
edges are mapped to simple
curves connecting the two corresponding vertices
and containing no other vertices,
and 
\item
every pair of edges has at most one
common point, which is either a common vertex or a crossing
(but not a touching).
\end{itemize}
Figure~\ref{fig:topological_drawing} shows 
the forbidden patterns for topological drawings. 
We also assume throughout the article 
that no three edges cross in a single point.
Topological drawings 
are also  known as ``good drawings'' or ``simple drawings''.

\begin{figure}[tb]
  \centering
   \includegraphics{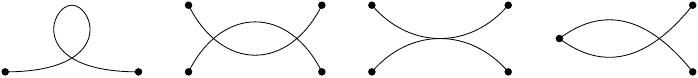}
  \caption{Forbidden patterns in topological drawings:
  self-crossings, double-crossings, touchings, and crossings of adjacent edges.}
  \label{fig:topological_drawing}  
\end{figure}

We discuss classic theorems such as Kirchberger's, Helly's,
and Carath\'eodory's theorem in terms of the \emph{convexity} hierarchy of topological drawings
developed by Arroyo, McQuillan, Richter, and
Salazar~\cite{ArroyoMRS2017_convex}, which we introduce in
Section~\ref{sec:Preliminaries}.  In that section, we
also define \emph{generalized signotopes}, a combinatorial generalization of
topological drawings.  
The connection between generalized signotopes and
topological drawings is deferred to Section~\ref{sec:GS_counting}.
Our proof of a generalization of Kirchberger's theorem
in Section~\ref{sec:Kirchberger} makes use of this combinatorial
structure. 
Section~\ref{sec:Caratheodory} deals with a generalization of
Carath\'eodory's theorem. In Section~\ref{sec:Helly}, we present a family of
topological drawings with arbitrarily large Helly number. 
We conclude this article with Section~\ref{sec:Discussion},
where we discuss some open problems.

\section{Preliminaries}
\label{sec:Preliminaries}

Let $D$ be a topological drawing and $v$ a vertex of $D$. The cyclic order $\pi_v$ of incident edges
around $v$ is called the \emph{rotation} of $v$
in $D$. 
The collection of rotations of all vertices is called the \emph{rotation system} of~$D$. 
Two topological drawings are \emph{weakly isomorphic} 
if there is an isomorphism of the underlying abstract graphs 
which preserves the rotation system or reverses all rotations.
Note that all projections of a drawing on the sphere to a plane are weakly isomorphic.

A triangular cell, which has no vertex on its boundary, is bounded by three
edges.  By moving one of these edges across the intersection of the two other
edges, one obtains a weakly isomorphic drawing; see Figure
\ref{fig:weak_strong_isomorphism}.  This operation is called
\emph{triangle-flip}. Gioan~\cite{Gioan05_full}, see also Arroyo et
al.~\cite{ArroyoMRS2017_reidemeister}, showed that any two weakly isomorphic
drawings of the complete graph can be transformed into each other with a
sequence of triangle-flips and at most one reflection of the drawing.

\begin{figure}[tb]
	\centering
	\includegraphics{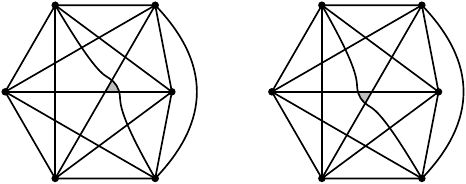}
	\caption{Two weakly isomorphic drawings of $K_6$,
	which can be transformed into each other by a triangle-flip.}
	\label{fig:weak_strong_isomorphism}
\end{figure}

Besides weak isomorphism, there is also the notion of
strong isomorphism: two topological drawings are called \emph{strongly isomorphic}
if they induce homeomorphic cell decompositions of the sphere.

\subsection{Convexity Hierarchy} 
\label{ssec:convhier}

Given a topological drawing $D$, 
we call the induced subdrawing of three vertices a \emph{triangle}.
Note that the edges of a triangle in a topological drawing do not cross.
The removal of a triangle separates the plane into two connected components
-- a bounded component and an unbounded component. 
We call the closure of these connected components \emph{sides}.
A side of a triangle is \emph{convex}
if every edge that has its two end-vertices in the side
is completely drawn in the side.
We are now ready to introduce the 
``convexity hierarchy'' of Arroyo et al.~\cite{ArroyoMRS2017_convex}).
For $1 \leq i < j \leq 6$, drawings with property ($j$)
also have property ($i$).

\begin{enumerate}[\quad (1)]
	\item topological drawings;
	
	\item \emph{convex} drawings:
	each triangle has a convex side;
	
	\item \emph{hereditary-convex} drawings:
	we can choose a convex side for each triangle such that,
	if a triangle~$\triangle_1$ is fully contained 
	in the chosen convex side of another triangle~$\triangle_2$,
	then also its chosen convex side is;
	
	\item \emph{cell-convex}\footnote{
		The authors of \cite{ArroyoMRS2017_convex} use the term \emph{face-convex} instead of cell-convex.
	} drawings: 
	there is a special cell $c_\infty$  
	such that, for every triangle, the side not containing $c_\infty$ is convex; 
	
	\item \emph{pseudolinear} drawings:
	there is an arrangement $\AA$ of pseudolines such that each
	edge of the drawing is supported by (contained in) one of the pseudolines of~$\AA$\footnote{
		Arrangements supporting a drawing of $K_n$ are also known as
		\emph{pseudoconfigurations of points}
		and can be considered as oriented matroids of rank~3 
		(cf.\  Chapter~5.3 of \cite{BjoenerLVWSZ1993}).
		For a formal definition of arrangements of pseudolines, 
		we refer the interested reader to \cite{FelsnerGoodman2016} or Chapter~6 of \cite{BjoenerLVWSZ1993}.
	};
	
	\item straight-line drawings:
	all edges are drawn as straight-line segments connecting their endpoints.
\end{enumerate}

Arroyo et al.~\cite{ArroyoMRS2017_pseudolines} showed that
the cell-convex drawings where the special cell~$c_\infty$ is drawn as the unbounded outer cell
are precisely the pseudolinear drawings
(see also~\cite{ArroyoBR2018} and \cite{AichholzerHPSV2015}).

\medskip

Pseudolinear drawings are generalized by {pseudocircular} drawings.
A drawing is called \emph{pseudocircular} if the edges can be extended to pseudocircles
(simple closed curves) such that any pair of pseudocircles
either has two crossings or is disjoint. Since stereographic projections preserve (pseudo)circles,
pseudocircularity is a property of drawings on the sphere.
Pseudocircular drawings were studied in a recent article by Arroyo, Richter,
and Sunohara \cite{ArroyoRS2021}. They provided an example of a 
topological drawing which is not pseudocircular.
Moreover, they proved that hereditary-convex drawings are precisely the
\emph{pseudospherical} drawings, i.e., pseudocircular drawings with the
additional two properties that
\begin{itemize}
 \item every pair of pseudocircles intersects, and
 \item for any two edges $e\neq f$ the pseudocircle $\gamma_e$
   has at most one crossing with~$f$.
\end{itemize}
The relation between convex drawings and 
pseudocircular drawings remains open.

\medskip

Convexity, hereditary-convexity, and cell-convexity are properties of the weak
isomorphism classes.  To see this, note that the existence of a convex side is
not affected by changing the outer cell or by transferring the drawing to the
sphere, moreover, convex sides are not affected by triangle-flips.
Hence, these properties only depend on the rotation system of the drawing.
For pseudolinear and straight-line drawings, however, the choice of the outer
cell plays an essential role.

\subsection{Generalized Signotopes}
\label{ssec:prelim_GS}

Let $D$ be a topological drawing of a complete graph in the plane. 
Assign an \emph{orientation} $\chi(abc) \in \{+,-\}$ to each ordered triple $(a,b,c)$ of vertices. 
The sign $\chi(abc)$ indicates whether 
we go counterclockwise or clockwise around the triangle
if we traverse the edges $(a,b),(b,c),(c,a)$ in this order. 

\smallskip

If $D$ is a straight-line drawing of $K_n$, then
the underlying point set  
has to be in general position (no three points are on a line).
Assuming that the points are labeled $1,\ldots,n$
and sorted from left to right,
then $\chi$ is \emph{monotone on 4-tuples}, that is,
\begin{itemize}
\item
for all $i<j<k<l$
the sequence 
$
\chi(ijk), \chi(ijl), \chi(ikl), \chi(jkl)
$
(index-triples in lexicographic order) has at most one sign-change. 
\end{itemize}
A \emph{signotope} is a mapping $\chi:\binom{[n]}{3} \to \{+,-\}$ with
the above monotonicity property, where $[n]=\{1,2,\ldots,n\}$.
Signotopes are in bijection with Euclidean pseudoline arrangements \cite{FelsnerWeil2001} 
and can be used to characterize pseudolinear
drawings \cite[Theorem 3.2]{BalkoFulekKyncl2015}.

When considering topological drawings of the complete graph
we have no meaningful ordering of the vertices.
Exchanging the labels of two vertices reverts the orientation of all
triangles containing both vertices. This suggests to look at the 
\emph{alternating} extension of $\chi$. Formally
$\chi(i_{\sigma(1)},i_{\sigma(2)},i_{\sigma(3)}) = \sgn(\sigma) \cdot \chi(i_1,i_2,i_3)$ 
for any distinct labels $i_1,i_2,i_3$ and any permutation $\sigma \in S_3$.
This yields a mapping $\chi: [n]_3 \to \{+,-\}$, where $[n]_3$ 
denotes the set of all triples $(a,b,c)$  with pairwise distinct $a,b,c \in [n]$.
To see whether the alternating extension of $\chi$ still has
a property comparable to the monotonicity of signotopes, we
have to look at 4-tuples of vertices, i.e., at drawings of $K_4$.
On the sphere there are two types of drawings of $K_4$: 
\typeI has one crossing and \typeII has no crossing. 
\TypeI can be drawn in two different ways in the plane:
in \typeIa the crossing is only incident to bounded cells 
and in \typeIb the crossing lies on the outer cell;
see Figure~\ref{fig:k4_three_types}.

\begin{figure}[tb]
	\centering
	
	\includegraphics[page=1,width=\textwidth]{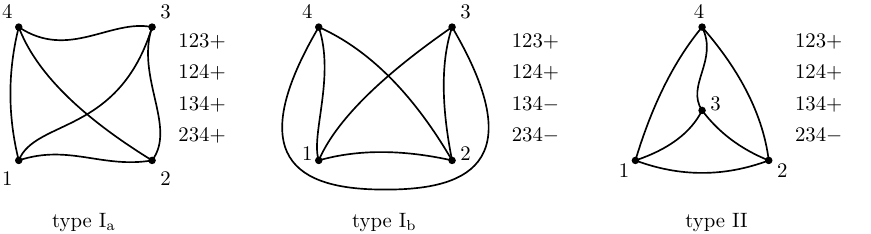}
	
	\caption{The three types of topological drawings of $K_4$ in the plane.}
	\label{fig:k4_three_types}
\end{figure}

A drawing of $K_4$ with vertices $a,b,c,d$ can be characterized 
in terms of the sequence of orientations $\chi(abc),\chi(abd),\chi(acd),\chi(bcd)$.
The drawing is 
\begin{itemize}
	\item 
	of \typeIa or \typeIb iff the sequence 
	is $++++$, $++--$, $+--+$, $-++-$, $--++$, or $----$;
	and
	
	\item 
	of \typeII 
	iff the number of $+$'s (and $-$'s respectively) in the sequence is odd.
\end{itemize}

\goodbreak 
Therefore there are at most two sign-changes
in the sequence $\chi(abc),\chi(abd),\allowbreak{}\chi(acd),\allowbreak{}\chi(bcd)$ and, moreover, 
any such sequence is in fact induced by a topological drawing of~$K_4$.
Allowing up to two sign-changes is equivalent to forbidding the two
patterns $+-+-$ and $-+-+$. 
Note that this classification is independent of the labeling and order of the vertices $a,b,c,d$. 

If  $\chi$ is alternating and
avoids the two patterns $+-+-$ and $-+-+$ on sorted indices, i.e.,
$\chi(ijk),\chi(ijl),\chi(ikl),\chi(jkl)$ has at most two
sign-changes for all $i<j<k<l$, 
then it avoids the two patterns in
$\chi(abc),\chi(abd),\chi(acd),\chi(bcd)$ for any pairwise
distinct $a,b,c,d \in [n]$. 
We refer to this as the
\emph{symmetry property} of the forbidden patterns. 

The symmetry property allows us to define \emph{generalized signotopes}
as alternating mappings \mbox{$\chi\colon[n]_3 \to \{+,-\}$} with at
most two sign-changes on $\chi(abc),\chi(abd),\allowbreak{}\chi(acd),\chi(bcd)$
for any pairwise different $a,b,c,d \in [n]$.
We conclude:

\begin{proposition}
\label{proposition:topdraw_induces_gensig}
Every topological drawing of $K_n$ induces a generalized signotope on $n$ elements.
\end{proposition}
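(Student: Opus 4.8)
The plan is to verify the two defining properties of a generalized signotope for the orientation map $\chi$ induced by $D$: that $\chi$ is alternating, and that the quadruple $\chi(abc),\chi(abd),\chi(acd),\chi(bcd)$ has at most two sign-changes for all pairwise distinct $a,b,c,d$. I would first dispose of the alternating property directly from the geometric definition. Because the three edges of a triangle in a topological drawing pairwise do not cross, they form a simple closed curve whose orientation in the plane is well-defined once a direction of traversal is fixed. A cyclic shift of $(a,b,c)$ traverses this curve in the same direction and hence leaves $\chi$ unchanged (matching $\sgn=+1$ for the even permutations), whereas a transposition reverses the traversal and hence flips the sign (matching $\sgn=-1$). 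Since $S_3$ is generated by transpositions, this yields $\chi(i_{\sigma(1)},i_{\sigma(2)},i_{\sigma(3)})=\sgn(\sigma)\,\chi(i_1,i_2,i_3)$ for every $\sigma\in S_3$, so $\chi$ is alternating.

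For the sign-change condition I would reduce to the case $n=4$. Restricting $D$ to any four vertices $a,b,c,d$ yields a topological drawing of $K_4$, and since the orientation of a triangle depends only on its own three edges and not on any further edges of $D$ that may cross through it, the four orientations $\chi(abc),\chi(abd),\chi(acd),\chi(bcd)$ agree whether computed in $D$ or in the restriction. I would then invoke the classification of $K_4$ drawings recalled above: each such drawing is of \typeIa, \typeIb, or \typeII, with the type-I drawings realizing exactly the six sequences $++++,\,++--,\,+--+,\,-++-,\,--++,\,----$ and the type-II drawings exactly those with an odd number of plus signs. A direct inspection shows that every one of these fourteen sequences has at most two sign-changes, while the only two length-four sequences with three sign-changes, namely $+-+-$ and $-+-+$, carry an even number of plus signs and are precisely the even-parity sequences absent from the type-I list; hence they are not realized. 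Thus $\chi$ avoids the two forbidden patterns on every quadruple of sorted indices.

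It remains to pass from sorted to arbitrary orderings, which is exactly the content of the symmetry property stated in the excerpt: since $\chi$ is alternating and avoids $+-+-$ and $-+-+$ on $\chi(ijk),\chi(ijl),\chi(ikl),\chi(jkl)$ for all $i<j<k<l$, it avoids them on $\chi(abc),\chi(abd),\chi(acd),\chi(bcd)$ for all pairwise distinct $a,b,c,d$. Combined with the alternating property, this is precisely the definition of a generalized signotope, which completes the argument. I expect no genuine obstacle here, since the classification of $K_4$ drawings already does the substantive work; the only point meriting care is the observation that crossings of other edges through a triangle leave that triangle's orientation untouched, which is what legitimizes the reduction to four vertices.
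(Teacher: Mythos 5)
Your proof is correct and takes essentially the same route as the paper: the paper likewise obtains the proposition from the classification of $K_4$ drawings by their sign sequences (which shows the patterns $+-+-$ and $-+-+$ are never realized, i.e.\ at most two sign-changes occur) combined with the symmetry property of the forbidden patterns. Your explicit verifications of the alternating property and of the reduction to four vertices merely spell out steps the paper treats as immediate from the geometric definition of $\chi$.
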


We defer the structural investigation of generalized signotopes to Section~\ref{sec:GS_counting}, where we show that
there are more generalized signotopes
than topological drawings. Hence generalized signotopes extend the convexity hierarchy introduced above.

\section{Kirchberger's Theorem}
\label{sec:Kirchberger}

Two closed sets $A, B \subseteq \mathbb{R}^d$ are called
\emph{separable} if there exists a hyperplane $H$ separating them, 
i.e., $A\subset H_1$ and
$B\subset H_2$ with $H_1$, $H_2$ being the two closed half-spaces defined by~$H$. 
It is well-known that, if two non-empty compact sets $A, B$ are separable, then they can also be
separated by a hyperplane $H$ containing points of $A$ and $B$. 
\emph{Kirchberger's theorem} (see~\cite{Kirchberger1903} or~\cite{Barvinok2002})
asserts that two finite point sets $A, B \subseteq \mathbb{R}^d$ are
separable if and only if for every $C \subseteq A \cup B$ with
$|C|=d+2$, $C \cap A$ and $C \cap B$ are separable.

Goodman and Pollack \cite{GoodmanPollack1982} proved duals of Kirchberger's
theorem and further theorems like Radon's, Helly's, and Carath\'eodory's theorem
for arrangements of pseudolines.  Their results also transfer to
pseudoconfigurations of points and thus to pseudolinear drawings.
To be more precise, they proved a natural generalization of Kirchberger's theorem to pseudoline-arrangements in the plane which, by duality, is equivalent to a separating statement on pseudo\-configurations of points in the plane (cf.\ Theorem~4.8 and Remark~5.2 in \cite{GoodmanPollack1982}).

The 2-dimensional version of Kirchberger's theorem can be formulated in terms of triple orientations which indicate whether a point lies on the right or left side of a chosen line.  We show a generalization for topological drawings using generalized signotopes.
Two sets $A, B \subseteq [n]$ are \emph{separable} if there
exist $i,j \in A \cup B$ such that $\chi(i,j,x) = +$ for all
$x \in A \setminus\{i,j\}$ and $\chi(i,j,x)=-$ for all
$x \in B\setminus\{i,j\}$.  In this case we say that $ij$
\emph{separates} $A$ from~$B$ and write $\chi(i,j,A)=+$ and
$\chi(i,j,B)=-$.  Moreover, if we can find $i\in A $ and $j \in B$, 
we say that $A$ and $B$ are \emph{strongly separable}. 
As an example, consider the \mbox{4-element} generalized signotope of the \typeIb drawing of $K_4$ in Figure~\ref{fig:k4_three_types}. 
The sets $A=\{1,2\}$ and $B=\{3,4\}$
are strongly separable with $i=2$ and $j=3$ because $\chi(2,3,1)=+$ and $\chi(2,3,4)=-$.

\begin{theorem}[Kirchberger for Generalized Signotopes]
	\label{theorem:kirchberger}
	Let $\chi:[n]_3 \to \{+,-\}$ be a generalized signotope, and
	let $A, B \subseteq [n]$ be two non-empty sets.  
	If for every $C \subseteq A \cup B$ with
	$|C| = 4$, the sets $A \cap C$ and $B \cap C$ are separable, then
	$A$ and $B$ are strongly separable.
\end{theorem}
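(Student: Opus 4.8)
The plan is to prove the statement by induction on $n=|A\cup B|$, upgrading a separator for a one-element-smaller instance to one for the full instance. I first dispose of the degenerate overlap: if $A\cap B\neq\emptyset$, then any shared point is forced to be one of the separating endpoints (it cannot receive both signs $+$ and $-$), and I expect this to reduce routinely to the disjoint case, so I assume $A\cap B=\emptyset$. The cases $|A\cup B|\le 3$ are vacuous, and $|A\cup B|=4$ is the true base case: here the only relevant $C$ is $A\cup B$ itself, so the hypothesis says exactly that $A,B$ are separable, and a finite inspection of the admissible sign sequences $\chi(abc),\chi(abd),\chi(acd),\chi(bcd)$ classified before Proposition~\ref{proposition:topdraw_induces_gensig} shows that separability can always be witnessed by a pair with one endpoint in each of $A$ and $B$ (for instance, for $A=\{1,2\}$, $B=\{3,4\}$ separated by $(1,2)$ one checks that exactly one of $(1,3),(1,4)$ separates, according to the value of $\chi(1,3,4)$).

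For the inductive step, assume $n\ge 5$ and, after possibly swapping the roles of $A$ and $B$ (which is symmetric, reversing all signs), that $|B|\ge 2$. Choose $p\in B$ and apply the induction hypothesis to $A$ and $B\setminus\{p\}$, whose $4$-subset condition is inherited; this produces a strong separator $(a,b)$ with $a\in A$, $b\in B\setminus\{p\}$, so that $\chi(a,b,x)=+$ for all $x\in A\setminus\{a\}$ and $\chi(a,b,y)=-$ for all $y\in B\setminus\{b,p\}$. If moreover $\chi(a,b,p)=-$, then $(a,b)$ already strongly separates $A$ from $B$ and we are done. The entire difficulty is concentrated in the opposite case $\chi(a,b,p)=+$, where the reinserted point $p$ lands on the ``$A$-side'' of the separating pair.

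To repair this I would keep the endpoint $a\in A$ fixed and ``rotate'' the second endpoint from $b$ across $p$ to a new partner $b'\in B$, so that $(a,b')$ sends all of $A\setminus\{a\}$ to $+$ and all of $B\setminus\{b'\}$, now including $p$, to $-$. A useful anchor is obtained by applying the induction hypothesis to the restricted instance $(\{a\},B)$: its $4$-subset condition is inherited because every $4$-set inside $\{a\}\cup B$ meets $A$ only in $a$, and it yields a ``$B$-tangent from $a$'', i.e.\ a point of $B$ whose pair with $a$ places all of $B$ correctly on the $-$ side. The task is then to reconcile such a tangent with the $A$-side condition, which I would carry out by a local-improvement argument: starting from a candidate pair that still has a wrongly-placed point $z$, re-pivot onto $z$ and control which points switch side by applying the forbidden patterns $+-+-$ and $-+-+$ to the quadruples $\{a,b',z,w\}$, arguing that the number of misplaced points strictly decreases.

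The main obstacle is making this rotation rigorous. For a genuine generalized signotope the tournament on $[n]\setminus\{a\}$ given by $x\to y$ iff $\chi(a,x,y)=+$ need not be transitive: radial $3$-cycles are admissible, and they correspond precisely to the \typeII drawings of $K_4$ (a $3$-cycle forces the enclosing triple to have an odd number of $+$'s), so there is no ready-made angular order along which to sweep. The heart of the proof is therefore to show that the $4$-subset separability hypothesis excludes exactly the configurations that would obstruct the sweep: a putative obstruction must exhibit a quadruple $C$ whose trace $A\cap C$, $B\cap C$ is non-separable, contradicting the hypothesis. This is where the $4$-subset hypothesis, and not merely the bare generalized-signotope axiom, is essential, and where the \emph{symmetry property} of the forbidden patterns does the real work.
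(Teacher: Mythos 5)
You have the right skeleton---a finite check for the base case $n=4$ (this is exactly the content of the paper's Tables~\ref{table:separators_1_3} and~\ref{table:separators_2_2}) plus an induction that re-inserts a point $p$ and repairs the separator---but the repair step, which is where the entire content of the theorem lies, is never carried out. You describe a ``rotation''/``local improvement'' plan, correctly identify the obstruction (the radial tournament $x\to y$ iff $\chi(a,x,y)=+$ need not be transitive), and then simply assert that the $4$-subset hypothesis must exclude every obstruction to the sweep. That assertion \emph{is} the theorem; declaring it ``the heart of the proof'' without proving it leaves the proof empty. Worse, the specific repair you propose---keep the endpoint $a\in A$ fixed and only rotate the $B$-endpoint across $p$---is false even for straight-line drawings. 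Take $a_1=(0,0)$, $a_2=(1,0)$, $b_1=(-1,-1)$, $p=(4,1)$ with $\chi$ the standard orientation. Then $(a_1,b_1)$ strongly separates $A=\{a_1,a_2\}$ from $\{b_1\}$, and $\chi(a_1,b_1,p)=+$, so you are in the hard case; but no strong separator of $A$ and $B=\{b_1,p\}$ is anchored at $a_1$, since $(a_1,b_1)$ fails because $\chi(a_1,b_1,p)=+$ and $(a_1,p)$ fails because $\chi(a_1,p,a_2)=-$. The unique strong separator is $(a_2,p)$: both endpoints must move. Any correct repair therefore has to handle re-anchoring on the $A$-side simultaneously, and your misplaced-point-counting sketch gives no mechanism for that.

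There is also a circularity: in the case $|A|=1$ your ``anchor'' step applies the induction hypothesis to $(\{a\},B)$, but then $\{a\}\cup B=A\cup B$, so this is the original instance and the induction on $n=|A\cup B|$ gives you nothing. It is instructive to compare with what the paper actually does. It first reduces ``separable'' to ``strongly separable'' for all $4$-tuples (the tables), then proves the cases $\min(|A|,|B|)=1,2,3$ by three genuinely different direct arguments---the $|A|=1$ case is a maximality argument close in spirit to your sweep (and is the one case where the anchor is forced, so the sweep idea works), while $|A|=2$ and especially $|A|=3$ require careful sign-chasing with relabeling precisely because the anchor moves---and only then handles $\min(|A|,|B|)\ge 4$ by a minimal-counterexample argument (minimizing the smaller set, not $n$) that reduces everything to a $6$-element configuration $\{a,b,a',b',a^{\ast},b^{\ast}\}$ whose $A$-part has $3$ elements. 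The fact that three separate nontrivial base cases are needed before the general reduction kicks in is strong evidence that the uniform one-point-reinsertion sweep you propose cannot be pushed through as stated.
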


Note that, since every topological drawing yields a generalized signotope,
Theorem~\ref{theorem:kirchberger} generalizes Kirchberger's theorem
to topological drawings of complete graphs. In terms of topological drawings separability means that there exists an edge $ij$ such that all triangles $ija$ for  $a \in A$ are oriented counterclockwise and all triangles $ijb$ for $b \in B$ are oriented clockwise. 
In the classic version of Kirchberger's theorem, the converse statement of 
Theorem~\ref{theorem:kirchberger} is trivially true. 
A separating hyperplane for the point set separates all subsets.
However, in the setting of generalized signotopes the reverse direction is no longer true.
A separating pair of points for $A,B$ is not necessarily contained in a 4-element subset~$C \subset A \cup B$.
In Figure~\ref{fig:kirchberger_reverse_orientation}, we provide a generalized signotope on 6 elements with a separator for a fixed partition into $A = \{1,2\}$ and $B = \{3,4,5,6\}$. However for the subset $C = \{2,4,5,6\}$ the two sets $A \cap C = \{2\}$ and $B \cap C = \{4,5,6\}$ are not separable. 
Moreover, this generalized signotopes comes from a simple drawing, which is drawn in Figure~\ref{fig:kirchberger_reverse_fig}.
The edge marked bold separates the blue from the red vertices. However, the subdrawing of the $K_4$ marked with dashed edges has no separator.

\begin{figure}[htb]
	\begin{subfigure}[b]{.5\textwidth}
		\centering
		\includegraphics{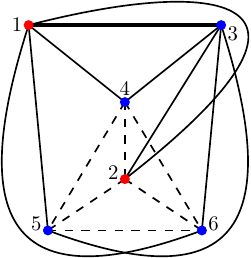}
		\vspace{3ex}
		\caption{ }
		\label{fig:kirchberger_reverse_fig}
	\end{subfigure}
	\begin{subfigure}[b]{.4\textwidth}
		\centering
		\begin{align*}
			&\gamma(4,5,6) &&= &&+  &&& \gamma(1,5,6) &&= &&- \\
			&\gamma(3,5,6) &&= &&-  &&& \gamma(1,4,6) &&= &&- \\
			&\gamma(3,4,6) &&= &&-  &&& \gamma(1,4,5) &&= &&- \\
			&\gamma(3,4,5) &&= &&-  &&& \gamma(1,3,6) &&= &&- \\
			&\gamma(2,5,6) &&= &&-  &&& \gamma(1,3,5) &&= &&- \\
			&\gamma(2,4,6) &&= &&-  &&& \gamma(1,3,4) &&= &&- \\
			&\gamma(2,4,5) &&= &&+  &&& \gamma(1,2,6) &&= &&- \\
			&\gamma(2,3,6) &&= &&-  &&& \gamma(1,2,5) &&= &&- \\
			&\gamma(2,3,5) &&= &&-  &&& \gamma(1,2,4) &&= &&- \\
			&\gamma(2,3,4) &&= &&+  &&& \gamma(1,2,3) &&= &&- 
		\end{align*}
		\caption{ }
		\label{fig:kirchberger_reverse_orientation}
	\end{subfigure}
	\caption{\subref{fig:kirchberger_reverse_fig} Simple drawing showing that the reverse direction of Kirchberger is not true. The bold edge is a separator for the drawing on all 6 vertices. However, the subdrawing of the $K_4$ marked with the dashed edges has no separator. The vertices of $A$ are marked red and the vertices of $B$ blue.
		\subref{fig:kirchberger_reverse_orientation} Orientations of the drawing yielding the generalized signotope $\gamma$.}
	\label{fig:kirchberger_reverse}
\end{figure}

\begin{proof}[Proof of Theorem~\ref{theorem:kirchberger}]
	First we prove that all 4-tuples $C \subseteq A \cup B$ with
	$C\cap A$ and \mbox{$C \cap B$} non-empty 
	which are separable are also strongly separable. 
    This can be verified looking at Tables~\ref{table:separators_1_3} and~\ref{table:separators_2_2}, which
    show that, in all separable generalized signotopes 
    on $\{a,b_1,b_2,b_3\}$ and $\{a_1,a_2,b_1,b_2\}$, respectively,
    there is a strong separator of the sets $\{a\}$ and $\{b_1,b_2,b_3\}$ or $\{a_1,a_2\}$ and $\{b_1,b_2\}$, respectively.
    Hence in the following we
	assume that all such 4-tuples from $A \cup B$ are strongly separable.

    \begin{table}[htb]
    \[
    \begin{tabu}{llll|l}
    \chi(a,b_1,b_2)&
    \chi(a,b_1,b_3)&
    \chi(a,b_2,b_3)&
    \chi(b_1,b_2,b_3)&
    \text{list of separators}\\
    \hline
    +&    +&    +&    +& \underline{ab_3},b_1a,b_1b_3\\
    +&    +&    +&    -& \underline{ab_3},b_1a,b_1b_2,b_2b_3\\
    +&    +&    -&    +& \underline{ab_2},b_1a,b_1b_3,b_3b_2\\
    +&    +&    -&    -& \underline{ab_2},b_1a,b_1b_2\\
    +&    -&    +&    +& \text{(no separator)}\\
    +&    -&    -&    +& \underline{ab_2},b_3a,b_3b_2\\
    +&    -&    -&    -& \underline{ab_2},b_1b_2,b_3a,b_3b_1\\
    -&    +&    +&    +& \underline{ab_3},b_1b_3,b_2a,b_2b_1\\
    -&    +&    +&    -& \underline{ab_3},b_2a,b_2b_3\\
    -&    +&    -&    -& \text{(no separator)}\\
    -&    -&    +&    +& \underline{ab_1},b_2a,b_2b_1\\
    -&    -&    +&    -& \underline{ab_1},b_2a,b_2b_3,b_3b_1\\
    -&    -&    -&    +& \underline{ab_1},b_2b_1,b_3a,b_3b_2\\
    -&    -&    -&    -& \underline{ab_1},b_3a,b_3b_1\\
    \end{tabu}
    \]
    \caption{Separators for generalized signotopes on $\{a,b_1,b_2,b_3\}$.
    Strong separators are underlined.} 
    \label{table:separators_1_3}
    \end{table}

    \begin{table}[htb]
    \[
    \begin{tabu}{llll|l}
    \chi(a_1a_2b_1)&
    \chi(a_1a_2b_2)&
    \chi(a_1b_1b_2)&
    \chi(a_2b_1b_2)&
    \text{list of separators}\\
    \hline
    +&   +&   +&   +& a_2a_1,\underline{a_2b_2},b_1a_1,b_1b_2\\
    +&   +&   +&   -& a_2a_1,\underline{a_2b_1},b_1a_1\\
    +&   +&   -&   +& a_2a_1,\underline{a_2b_2},b_2a_1\\
    +&   +&   -&   -& a_2a_1,\underline{a_2b_1},b_2a_1,b_2b_1\\
    +&   -&   +&   +& \underline{a_1b_2},b_1a_1,b_1b_2\\
    +&   -&   -&   +& \text{(no separator)}\\
    +&   -&   -&   -& \underline{a_2b_1},b_2a_2,b_2b_1\\
    -&   +&   +&   +& \underline{a_2b_2},b_1a_2,b_1b_2\\
    -&   +&   +&   -& \text{(no separator)}\\
    -&   +&   -&   -& \underline{a_1b_1},b_2a_1,b_2b_1\\
    -&   -&   +&   +& a_1a_2,\underline{a_1b_2},b_1a_2,b_1b_2\\
    -&   -&   +&   -& a_1a_2,\underline{a_1b_2},b_2a_2\\
    -&   -&   -&   +& a_1a_2,\underline{a_1b_1},b_1a_2\\
    -&   -&   -&   -& a_1a_2,\underline{a_1b_1},b_2a_2,b_2b_1\\
    \end{tabu}
    \]
    \caption{Separators for generalized signotopes on $\{a_1,a_2,b_1,b_2\}$.
    Strong separators are underlined.} 
    \label{table:separators_2_2}
    \end{table}

	\medskip
        
	By symmetry we may assume $|A| \leq |B|$.
	First we consider the cases $|A|=1,2,3$ individually and
	then the case $|A| \ge 4$.
	
	\medskip
	
	Let $A = \{a\}$, let
	$B'$ be a maximal subset of $B$ such that $B'$ is strongly separated from
	$\{a\}$, and let $b \in B'$ be such that $\chi(a,b,B') = -$.
	Suppose that $B' \neq B$, then there is a $b^\ast \in B \backslash B'$ with
	\begin{align}
	\label{eq:A=1_abb*=+}
	\chi(a,b,b^\ast) = +.
	\end{align}
	By maximality of $B'$ we cannot use the pair $a,b^\ast$ for a strong separation of
        $\{a\}$ and $B' \cup \{b^\ast\}$. Hence, for some $b' \in B'$:
	\begin{align}
	\label{eq:A=1_ab*b'=+}
	\chi(a,b^\ast,b') = +.
	\end{align}  
	Since $\chi$ is alternating (\ref{eq:A=1_abb*=+}) and
	(\ref{eq:A=1_ab*b'=+}) together imply $b' \neq b$.
	Since $b'\in B'$ we have $\chi(a,b,b') =-$.
	From this together with (\ref{eq:A=1_abb*=+}) and
	(\ref{eq:A=1_ab*b'=+})
	it follows that the four-element set 
	$\{a,b,b',b^\ast\}$ has no separator.
	This is a contradiction, whence $B' = B$.
	
	\medskip
	
	\goodbreak
	\noindent
	As a consequence we obtain:
	\begin{itemize}
		\item Every one-element set $\{a\}$ with $a\in A$ can be strongly separated
		from~$B$. Since $\chi$ is alternating there is a unique
		$b(a)\in B$ such that $\chi(a,b(a),B) = -$.
	\end{itemize}
	
	\medskip
	
        \noindent
	Now consider the case that $A=\{a_1,a_2\}$.  
	Let $b_i = b(a_i)$, i.e.,
	$\chi(a_i,b_i,B)  =-$ for~$i=1,2$.
	If $\chi(a_1,b_1,a_2) = +$ or if $\chi(a_2,b_2,a_1)=+$,
	    then $a_1b_1$ or $a_2b_2$, respectively,
        is a strong separator for $A$ and~$B$. 
    Therefore, we may assume that
        $\chi(a_1,b_1,a_2) = -$,
        $\chi(a_2,b_2,a_1 ) = -$ and therefore $b_1 \neq b_2$.  
        We get the sequence $+--+$ for the four-element set $\{a_1,a_2, b_1, b_2\}$
        which has no strong separator (cf. Table \ref{table:separators_2_2}), a contradiction.
	
	\medskip
	
	Let $A = \{a_1,a_2,a_3\}$. Suppose that $A$
	is not separable from $B$. Let $b_i = b(a_i)$, i.e.,
	$\chi(a_i,b_i,B)  = -$ for $i=1,2,3$.
	For $i,j\in\{1,2,3\}$, $i\neq j$ we define
	$s_{ij} = \chi(a_i,b_i, a_j)$.
	
	If $s_{ij} = +$ for some $i$ and all $j \neq i$,
        then $a_ib_i$ separates $A$ from $B$.
        Hence, for each $i$ there exists $j \neq i$ with
        $s_{ij} = -$.
	
	If $s_{ij} = s_{ji} = -$ for some $i,j$, 
	then since $\chi$ is alternating $b_i\neq b_j$ and
	$\{a_i,a_j,b_i,b_j\}$ corresponds to the row $+--+$ in
        Table~\ref{table:separators_2_2}, i.e., there is no strong separator.
	Hence, at least one of $s_{ij}$ and $s_{ji}$ is $+$.
	
	These two conditions imply that we can relabel the elements of $A$ such
	that $s_{12} = s_{23} = s_{31} = +$ and
	$s_{13} = s_{21} = s_{32} = -$. Suppose that $b_i = b_j = b$ for some $i \neq j \in \{1,2,3\}$, then the four elements 
	$\{b,a_1,a_2,a_3\}$ have the pattern $-+-*$. Avoiding the forbidden pattern, 
	we get $-+--$ 
	in Table~\ref{table:separators_1_3}, i.e., there is no strong separator. This contradiction shows that $b_1,b_2,b_3$ must be pairwise distinct. 
	
	From $s_{32} = -$ and $s_{31}=+$ we find that
        $\{b_3,a_1,a_2,a_3\}$ corresponds to a row of type $*+-*$ in
        Table~\ref{table:separators_1_3}. We conclude that the
        strong separator of $\{b_3,a_1,a_2,a_3\}$ is $a_2b_3$. In particular,
	\begin{align}
	\chi (b_3,a_1,a_2) = +. \label{eq:A=3_a2b3a1=+}
	\end{align}
	
        Now consider $\{a_1,a_2,b_1,b_3\}$. 
	From
        $s_{12} = +$, equation~(\ref{eq:A=3_a2b3a1=+}), and $\chi(a_1,b_1,b_3) = -$
        we obtain the pattern $-+-*$. Since $-+-+$ is forbidden 
	we obtain
	\begin{align}
	\chi(a_2,b_1,b_3) =-. \label{eq:A=3_b1a2b3=+}
	\end{align}
	The set $\{a_2,a_3,b_1,b_3\}$ needs a strong separator. 
	The candidate pair
	$a_3b_1$ is made impossible by $\chi (a_3,b_1,b_3) = +$,
	$a_3b_3$ is made impossible by $s_{32}=-$, and
	$a_2b_3$ is made impossible by (\ref{eq:A=3_b1a2b3=+}).
	Hence $a_2b_1$ is the strong separator and, in particular, it holds
	\begin{align}
	\chi (a_2,b_1,a_3) = + \label{eq:A=3_a2b1a3=+}.
	\end{align}
	But now the set $\{a_1,a_2,a_3,b_1\}$ has no strong separator. 
	The candidate pair $a_1b_1$ is impossible because of $s_{13} =-$,
	$a_2b_1$ does not separate because $s_{12} = +$,
	and (\ref{eq:A=3_a2b1a3=+}) shows that $a_3b_1$ cannot separate the set. 
	This contradiction proves the case $|A| =3$. 
	
	\medskip
	
	For the remaining case $|A|\geq 4$ consider a counterexample
	$(\chi,A,B)$ minimizing the size of the smaller of the two sets. 
	We have $4 \leq |A| \leq |B|$.
	
	Let $a^{\ast} \in A$. By minimality $A' = A\backslash \{a^{\ast}\}$
    is separable from $B$.  Let $a\in A'$ and $b\in B$ such that
        $\chi(a,b,A') = +$ and $\chi(a,b,B) = -$.  Hence
	\begin{align}
	\label{eq:aba*=-}
	\chi(a,b,a^{\ast}) = -.
	\end{align}
	Let $b^{\ast} = b(a^{\ast})$, i.e., $\chi(a^{\ast},b^{\ast}, B )= -$.
	There is some 
	$a' \in A'$ such that
	\begin{align}
	\label{eq:a*b*a'=-}
	\chi(a^{\ast},b^{\ast},a') = -.
	\end{align}
	
	If $a' = a$, then $b \neq b^\ast$ because of (\ref{eq:aba*=-})
        and (\ref{eq:a*b*a'=-}). 
        From (\ref{eq:aba*=-}), (\ref{eq:a*b*a'=-}), $\chi(a,b,B) = -$, and 
        $\chi(a^{\ast},b^{\ast}, B )= -$ it follows that the  four-element set
        $\{ a , a^\ast, b, b^\ast \}$ has the sign pattern $+--+$, hence there is no
        separator, see Table \ref{table:separators_2_2}. This shows that $a' \neq a$.

	Let $b' = b(a')$.  	
	If $b \neq b'$ we
	look at the four elements $\{a,b,a',b'\}$. It corresponds to
        $+-*-$ so that we can conclude $\chi(a,a',b') = -$.
        If $b = b'$, then $a' \in A'$ implies
	$\chi(a,b,a') = +$ which yields $\chi(a',b',a) = -$. 
        
	Hence, regardless whether $b=b'$ or $b\neq b'$ we have
	\begin{align}
	\label{eq:a'b'a=-}
	\chi(a',b',a) = - \ .
	\end{align}
	
	Since $|A| \geq 4$, we know by the minimality of the instance $(\chi,A,B)$ that the set
	$\{a,b, a',\allowbreak{}b',a^{\ast},b^{\ast}\}$, which has 3 elements of~$A$ and at
	least 1 element of~$B$, is separable.  It follows from
	$\chi(a,b,B) = \chi(a',b',B) = \chi(a^{\ast},b^{\ast},B) = -$ that
	the only possible strong separators are $ab$, $a'b'$, and
	$a^{\ast}b^{\ast}$. They, however, do not separate because of
	(\ref{eq:aba*=-}), (\ref{eq:a*b*a'=-}) and (\ref{eq:a'b'a=-})
	respectively.  This contradiction shows that there is no counterexample. 
\end{proof}

\section{Carath\'eodory's Theorem}
\label{sec:Caratheodory}

\emph{Carath\'eodory's theorem} asserts that,
if a point $x$ lies in the convex hull of a point set~$P$ in~$\RR^d$,
then $x$ lies in the convex hull of at most $d+1$ points of~$P$.

As already mentioned in Section~\ref{sec:Kirchberger}, Goodman and Pollack \cite{GoodmanPollack1982} proved a dual of Carath\'eodory's
theorem, which transfers to pseudolinear drawings.

A more general version of Carath\'eodory's theorem in the plane is due to Balko,
Fulek, and Kyn\v{c}l, who provided a generalization to topological drawings.
In this section, we present a shorter proof for their theorem.

\begin{theorem}[{Carath\'eodory for Topological Drawings \cite[Lemma~4.7]{BalkoFulekKyncl2015}}]
\label{theorem:Caratheodory_generalized}
Let $D$ be a topological drawing of $K_n$ and 
let $x \in \RR^2$ be a point contained in a bounded connected component of $\RR^2-D$. 
Then there is a triangle in $D$ that contains $x$ in the bounded side.
\end{theorem}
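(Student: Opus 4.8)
My plan is to recast the statement on the sphere $S^2=\RR^2\cup\{\infty\}$ and reduce it to a pure separation property. Place the drawing on $S^2$ and let $f_\infty$ be the face containing $\infty$ (the unbounded face of the planar drawing). For a triangle $T=abc$, whose three pairwise non-crossing edges form a simple closed curve, the two \emph{sides} of $T$ are the two closed discs this curve bounds on $S^2$, and the interior of $T$ (its bounded side in the plane) is exactly the side not containing $\infty$. Hence ``$x$ lies in the interior of $T$'' is equivalent to ``$T$ separates $x$ from $\infty$''. Likewise, ``$x$ lies in a bounded component of $\RR^2-D$'' means precisely that $x$ and $\infty$ lie in different faces of the spherical drawing. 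Thus the theorem becomes: \emph{if the drawing $D$ separates $x$ from $\infty$, then some triangle of $D$ already separates $x$ from $\infty$.} (The converse is immediate, since a triangle interior is bounded.)

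I would prove this by induction on $n$. For $n=3$ the drawing is a single triangle, whose only bounded face is its interior, so the statement is immediate. For $n\ge 4$, pick any vertex $v$ and consider the subdrawing $D-v$ of $K_{n-1}$; note that every triangle of $D-v$ is a triangle of $D$. If $D-v$ already separates $x$ from $\infty$, then $x$ lies in a bounded face of $D-v$ and the induction hypothesis yields a triangle of $D-v$ (hence of $D$) separating $x$ from $\infty$. Otherwise $x$ and $\infty$ lie in a common face $\Phi$ of $D-v$, which is an open disc; since they are separated in $D$ but not in $D-v$, the edges added back — the \emph{star} $\Sigma_v$ consisting of the $n-1$ edges incident to $v$ — must separate $x$ from $\infty$ inside $\Phi$.

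It then remains to upgrade this separation by $\Sigma_v$ to a separating \emph{triangle}. The key structural fact is that no two edges incident to $v$ cross (adjacent edges do not cross in a simple drawing), so $\Sigma_v$ is an embedded tree and $\Sigma_v\cap\overline\Phi$ is a crossing-free family of arcs in the disc $\overline\Phi$, all emanating from $v$. A crossing-free family separating two interior points of a disc must contain a separating crosscut; I would select one running through $v$ along two edges $vv_i$ and $vv_j$, from a point where $vv_i$ leaves $\Phi$ to a point where $vv_j$ leaves $\Phi$. I then claim that the triangle $vv_iv_j$ separates $x$ from $\infty$: inside $\Phi$ its boundary reduces to this crosscut, while the remaining parts of its boundary (the arcs of $vv_i,vv_j$ beyond those exit points, together with the edge $v_iv_j$) lie outside $\Phi$ and therefore cannot reconnect $x$ to $\infty$.

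The main obstacle is exactly this last extraction. One must guarantee that a \emph{through-$v$} crosscut exists — that is, rule out or separately treat the degenerate case $v\notin\Phi$, where the separation is achieved by a single edge re-entering $\Phi$ — which may require choosing the deleted vertex, or the indices $i,j$, adaptively. One must also control edges that leave and re-enter $\Phi$ several times, so that the boundary of $vv_iv_j$ genuinely meets $\Phi$ only along the chosen crosscut; an ``innermost arc'' choice of the exit points should force $vv_i$ and $vv_j$ to meet the crosscut's side of $\Phi$ only at $v$. It is this planar bookkeeping, rather than the combinatorial skeleton of the induction, where the real work lies.
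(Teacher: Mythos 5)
Your spherical reformulation and the induction skeleton are sound, and they are close in spirit to the paper's own setup (which also works with a minimal counterexample and strips the edges at a single vertex). But the proof has a genuine gap exactly where you place ``the real work'': extracting a separating triangle from the separating star $\Sigma_v$, and the two difficulties you defer are not bookkeeping --- they are the content of the theorem. First, nothing guarantees a separating crosscut \emph{through} $v$: the face $\Phi$ of $D-v$ containing both $x$ and $\infty$ is forced on you (you cannot arrange $v\in\Phi$), and even when $v\in\Phi$ the separation inside $\Phi$ may be achieved by a single re-entrant arc of one edge $vv_i$ with both endpoints on $\partial\Phi$; in that situation your construction offers no candidate triangle at all, and ``choosing the deleted vertex adaptively'' is not backed by any argument --- exhibiting a vertex for which a through-$v$ crosscut separates is essentially equivalent to the theorem itself. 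Second, even when a separating crosscut $\kappa\subseteq vv_i\cup vv_j$ through $v$ exists, the conclusion that the triangle $vv_iv_j$ separates $x$ from $\infty$ can fail: the edges $vv_i,vv_j$ may re-enter $\Phi$, so $\partial(vv_iv_j)\cap\Phi$ consists of $\kappa$ together with further disjoint crosscuts, and crossing \emph{any} of these toggles the side of the closed curve $\partial(vv_iv_j)$. If, say, exactly one re-entrant arc also separates $x$ from $\infty$ inside $\Phi$, then $x$ and $\infty$ lie on the \emph{same} side of the triangle, so your claim is false as stated; the ``innermost arc'' refinement is the right instinct, but it is precisely what would need to be defined and proved, not assumed.

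For comparison, the paper resolves both issues by a finer peeling: it removes the edges at one vertex $a$ one at a time, which isolates a single critical edge $ab$ and a curve $P$ from $x$ to infinity crossing no other edge of the current subdrawing $D'$; a ``lense'' argument --- using minimality of the number of crossings of $P$ with $ab$ and, crucially, completeness of the graph (the edges $bc_1$) --- shows that $P$ meets $ab$ exactly once. Then either some edge $ac_2$ is still present at $a$, in which case $abc_2$ is immediately the desired triangle, or $\deg(a)=1$ in $D'$, and an edge $cd$ crossing $ab$ between $a$ and $P$ traps $a$, and hence the edge $ac$ of $D$, inside the triangle $bcd$, which makes $abc$ the desired triangle. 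This last case is exactly your degenerate single-arc situation, and handling it is where the simplicity of the drawing and the completeness of $K_n$ are really used; your sketch stops short of this step.
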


Note that, in the classic version for points in $\RR^2$, the case $|A| = 1$ of Kirchberger's theorem implies Carath\'eodory's theorem, and vice versa. This is not true for the generalized versions. The vertex $1$ of \typeIb in Figure~\ref{fig:k4_three_types} is in the triangle spanned by $2,3,4$. However, partitioning the four vertices into $A = \{1\}$ and $B = \{2,3,4\}$ gives a separating pair $(1,2)$ because the triples $(1,2,3)$ and $(1,2,4)$ are oriented the same way.

\begin{proof}[Proof of Theorem~\ref{theorem:Caratheodory_generalized}]
Suppose towards a contradiction 
that there is a pair $(D,x)$ violating the claim.
We choose~$D$ minimal with respect to the number of vertices~$n$.

Let $a$ be a vertex of the drawing.  If we remove all incident edges
of $a$ from~$D$, then, by minimality of the example, $x$ becomes a point
of the outer cell.  Therefore, if we remove the incident
edges of~$a$ one by one, we find a last subdrawing~$D'$ such that $x$
is still in a bounded cell.  Let $ab$ be the edge such that in the
drawing $D'- ab$ the point $x$ is in the outer cell.

There is a simple curve $P$ 
connecting $x$ to infinity, which does not cross any of the edges in $D'-ab$.
By the choice of~$D'$, the curve $P$~has at least one crossing with $ab$. 
We choose $P$ minimal with respect to the number of crossings with $ab$. 

We claim that $P$ intersects $ab$ exactly once. 
Suppose that $P$ crosses $ab$ more than once. Then there 
is a \emph{lense}~$C$ formed by $P$ and $ab$, 
that is, two crossings of $P$ and $ab$ 
such that the simple closed curve $\partial C$, 
composed of a subcurve $P_1$ of~$P$ and a part $P_2$ of edge $ab$ between the crossings, encloses a simply connected region $C$, see Figure~\ref{fig:Caratheodory_illustration_page1}.

\begin{figure}[tb]
  \centering
  
  \hbox{}\hfill
    \begin{subfigure}[t]{.35\textwidth}
    \centering
    \includegraphics[page=1]{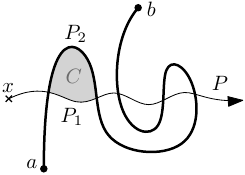}
    \caption{}
    \label{fig:Caratheodory_illustration_page1}  
  \end{subfigure}
  \hfill
  \begin{subfigure}[t]{.35\textwidth}
    \centering
    \includegraphics[page=2]{Caratheodory_illustration_new}
    \caption{}
    \label{fig:Caratheodory_illustration_page2}  
  \end{subfigure}
  \hfill\hbox{}

  \caption{
  \subref{fig:Caratheodory_illustration_page1} and \subref{fig:Caratheodory_illustration_page2} give
  an illustration of the proof of Theorem~\ref{theorem:Caratheodory_generalized}. 
  }
  \label{fig:Caratheodory_illustration}
\end{figure}

Now consider the curve $P'$ from $x$ to infinity which is obtained from $P$ by
replacing the subcurve $P_1$ by a curve $P_2'$ which is a close copy of $P_2$
in the sense that it has the same crossing pattern with all edges in $D$ and
the same topological properties, but is disjoint from $ab$.  As $P$ was chosen
minimal with respect to the number of crossings with~$ab$, there has to be an
edge of the drawing $D'$ that intersects $P_2'$ (and by the choice of~$P_2'$
also $P_2$).  
This edge has no crossing with~$P$, by construction, and crosses $ab$
at most once, so it has one of its endpoints inside the lense $C$ and one
outside~$C$.  
Depending on whether $b\in C$ or not, we choose an endpoint $c_1$
of that edge such that the edge $bc_1$ in~$D'$ intersects $\partial C$.  But
since they are adjacent, $bc_1$ cannot intersect $ab$ and by the choice of~$P$
it does not intersect $P$.  The contradiction shows that $P$ crosses $ab$ in a
unique point~$p$.

If $a$ has another neighbor $c_2$ in the drawing $D'$ 
then, since only edges incident to $a$ have been removed
there is an edge connecting $b$ to $c_2$ in $D'$. 
The edges $ac_2$ and $bc_2$ do not cross $P$, 
so $x$ is in the interior of the triangle $abc_2$
and we are done.

If there is no edge $ac_2$ in $D'$, then 
$\deg(a)=1$ in $D'$. 
As~$x$ is not in the outer cell of~$D'$, 
there must be an edge $cd$ in $D'$ 
which intersects the partial segment of the edge~$ab$ 
starting in $a$ and ending in $p$, in its interior. Let $c$ be the point on the same side of $ab$ as~$x$;
see Figure~\ref{fig:Caratheodory_illustration_page2}. 
The edges $bc$ and $bd$ of $D'$ cross neither $P$ nor $ab$. 
Consequently, the triangle $bcd$ (drawn blue) must contain $a$ in its interior.
We claim that
the edge $ac$ in the original drawing $D$ (drawn red dashed) 
lies completely inside the triangle $bcd$:
The bounded region defined by the edges $ab$, $cd$,
and $bd$ of $D'$ contains $a$ and $c$. 
Since $D$ is a topological drawing, and $ac$ has no crossing with $ab$ and
$cd$, $ac$ has no crossing with $bd$. This proves the claim.
Now the curve $P$ does not intersect $ac$,
and the only edge of the triangle $abc$ intersected by $P$ is~$ab$.
Therefore, $x$ lies in the interior of the triangle $abc$.
This contradicts the assumption that $(D,x)$ is a counterexample. 
\end{proof}

\subsection{Colorful Carath\'eodory Theorem}
\label{sec:CCT}

B{\'a}r{\'a}ny \cite{Barany1982} generalized Carath\'eodory's theorem as follows:
Given finite point sets $P_0,\ldots,P_d$  from $\RR^d$ 
such that there is a point $x \in \conv(P_0) \cap \ldots \cap \conv(P_d)$,
then $x$ lies in a simplex 
spanned by $p_0 \in P_0,\ldots,p_d \in P_d$. Such a simplex is called \emph{colorful}.
The theorem is known as the \emph{Colorful Carath\'eodory theorem}.

A strengthening, known as the \emph{Strong Colorful Carath\'eodory theorem},
was shown in
\cite{HolmsenPachTverberg2008} and independently in \cite{Arocha2009} (cf.\ \cite{KalaiBlog2009}):
It is sufficient if there is a point~$x$ with $x \in \conv(P_i \cup P_j)$ for all $i \neq j$,
to find a colorful simplex.
The Strong Colorful Carath\'eodory theorem was further generalized to oriented matroids by
Holmsen~\cite{Holmsen2016}.
In particular, the theorem applies to pseudolinear drawings 
(which are in correspondence with oriented matroids of rank~3).

Holmsen's proof~\cite{Holmsen2016} uses
sophisticated methods from topology.
We have convinced ourselves that B{\'a}r{\'a}ny's proof~\cite{Barany1982} 
can be adapted to pseudoconfigurations of points in the plane.
Instead of the Euclidean distance, one can
use a discrete metric that counts the minimum number of cells to traverse.
However, B{\'a}r{\'a}ny's proof idea does not directly generalize to higher dimensions because
oriented matroids of higher ranks do not necessarily 
have a representation in terms of pseudoconfigurations of points in $d$-space 
(cf.\ \cite[Chapter~1.4]{BjoenerLVWSZ1993}). 

\medskip

The following result shows that in the convexity hierarchy of topological drawings of $K_n$
the Colorful Carath\'eodory theorem is not valid beyond the class of pseudolinear drawings.

\begin{figure}[tb]
  \centering

    \includegraphics[page=1]{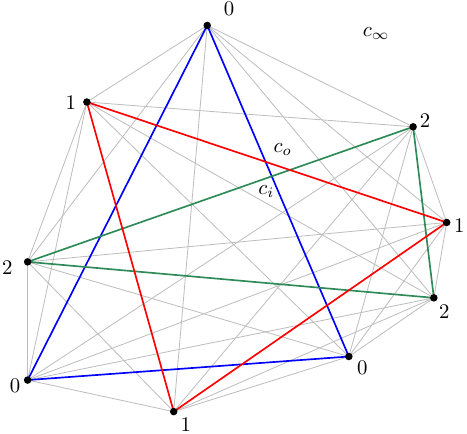}
    
  \caption{
  A cell-convex drawings of $K_9$.
  If the cell $c_o$ is chosen as the outer cell, 
  then Colorful Carath\'eodory theorem does not hold for the colored triangles and every point $x$ from the cell $c_i$.
  The special cell of the pseudolinear drawing is marked $c_\infty$. 
  }
    \label{fig:n9_CCT_counterexample}  
\end{figure}
\begin{proposition}
	The Colorful Carath\'eodory theorem does not hold for
	the cell-convex drawing of Figure~\ref{fig:n9_CCT_counterexample}.
\end{proposition}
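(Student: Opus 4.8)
The plan is to certify that the configuration of Figure~\ref{fig:n9_CCT_counterexample}, read with $f_o$ as the outer face, is a genuine counterexample. First I would fix the three color classes $P_0,P_1,P_2$ to be the vertex sets of the three monochromatic triangles in the figure, so that $|P_0|=|P_1|=|P_2|=3$ exhausts the nine vertices of $K_9$, and I would fix the point $x$. With $f_o$ declared the outer face, the \emph{bounded side} of a triangle is the side not containing $f_o$. Using Theorem~\ref{theorem:Caratheodory_generalized} to interpret ``$x\in\conv(P_i)$'' for a three-element color class as ``$x$ lies in the bounded side of the triangle on $P_i$'', the two things to establish are: (i) the hypothesis, namely that $x$ lies in the bounded side of each of the three monochromatic triangles; and (ii) the failure of the conclusion, namely that no colorful triangle $\{p_0,p_1,p_2\}$ with $p_i\in P_i$ has $x$ in its bounded side.

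Part~(i) can be read off the drawing immediately. For part~(ii) I would check all $3\cdot 3\cdot 3=27$ colorful triples. Since the three edges of a triangle in a topological drawing do not cross, each colorful triangle bounds a simple closed curve, and membership of $x$ in its bounded side is decided by a Jordan-curve parity argument: fixing one auxiliary curve from $x$ to $f_o$, I count its crossings with the three triangle edges, an odd total placing $x$ in the bounded side and an even total placing it outside. Carrying out these parity counts directly on the explicit drawing and confirming that every colorful triangle yields an even total completes the verification.

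It is worth recording why such a configuration should exist, since this guides the construction. With its special face $f_\infty$ as the outer face the drawing is pseudolinear, hence the Colorful Carath\'eodory Theorem holds there (it is valid for oriented matroids of rank~$3$), producing a colorful triangle $T^\ast$ whose convex side contains $x$. Switching the outer face from $f_\infty$ to $f_o$ exchanges the bounded and unbounded sides of precisely those triangles that separate $f_o$ from $f_\infty$. The colors are arranged so that such a witness $T^\ast$ is one of these ``flipped'' triangles, so after the switch the point $x$ lies in its unbounded side, while the three small monochromatic triangles are not flipped and keep $x$ inside; part~(ii) records that no other colorful triangle steps in as a replacement witness.

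The main obstacle is the bookkeeping of part~(ii): the $27$ non-containments must be ruled out, and one must resist the orientation shortcut ``$x$ is inside $abc$ iff $\chi(a,b,x)=\chi(b,c,x)=\chi(c,a,x)$''. That criterion is valid for pseudolinear drawings but may fail once $f_o$, rather than $f_\infty$, is the outer face, because the re-embedded drawing is then no longer pseudolinear. Anchoring every containment decision in the Jordan-curve parity criterion above keeps the argument rigorous while leaving it a finite, figure-based check.
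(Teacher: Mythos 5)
Your framework is sound: interpreting membership as lying in the side of a triangle not containing the outer face $f_o$, and deciding it by the parity of crossings of a fixed generic arc from $x$ to $f_o$ with the triangle's boundary, is a valid criterion, since the three edges of a triangle in a topological drawing do not cross each other and hence form a simple closed curve. Part (i) is read off the figure exactly as the paper does. The problem is part (ii): as written it is an unexecuted plan --- $27$ separate parity counts against a drawing of $K_9$ with $36$ edges --- and your proposal itself concedes that this bookkeeping is the main obstacle. None of those checks would fail, but none of them is actually carried out, and the entire mathematical content of the proposition is precisely those $27$ non-containments; deferring them to ``confirm on the figure'' leaves the proof incomplete.

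The missing idea, which makes all $27$ cases collapse simultaneously and slots directly into your own parity framework, is a particular choice of the auxiliary curve. By the construction of Figure~\ref{fig:n9_CCT_counterexample}, the point $x$ is separated from the outer face $f_o$ only by three \emph{colored} edges: there is a generic arc from $x$ to $f_o$ whose only crossings with the drawing are with three monochromatic edges of the colored triangles. Take this arc as your fixed curve. Every edge of a colorful triangle joins vertices of two \emph{different} color classes, so no edge of any colorful triangle is among these three monochromatic edges; hence your arc crosses the boundary of every colorful triangle exactly zero times, the parity is even in all $27$ cases at once, and no colorful triangle contains $x$. This one observation is the paper's whole proof of part (ii). Two minor further points: invoking Theorem~\ref{theorem:Caratheodory_generalized} to interpret $x\in\conv(P_i)$ is unnecessary (for a three-element class this is just the bounded side of its triangle, by definition), and your closing caution against the orientation shortcut, while correct, becomes moot once the escape-curve argument replaces the case analysis.
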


\begin{proof}
The drawing depicted in Figure~\ref{fig:n9_CCT_counterexample} 
is cell-convex because it is obtained from a straight-line drawing 
by choosing $c_o$ as outer cell. Let $x$ be an arbitrary point from $c_i$.
The point $x$ is contained in the three colored triangles and is separated from the outer cell
only by three colored edges. 
Therefore, there is no
triangle in the drawing spanned by a red, a green, and a blue point such that $x$ is contained in the triangle formed by these three points.
\end{proof}

\section{Helly's Theorem}
\label{sec:Helly}

The \emph{Helly number} of a family of sets $\FF$ with  empty intersection
is the size of the smallest subfamily of $\FF$ with  empty intersection.
\emph{Helly's theorem} asserts that 
the Helly number of a family of $n$ convex sets 
$S_1,\ldots,S_n$ from $\RR^d$ is at most~$d+1$,
i.e.,
the intersection of $S_1,\ldots,S_n$ is non-empty
if the intersection of every subfamily of size $d+1$ is non-empty.

In the following we discuss the Helly number
in the context of topological drawings,
where the sets $S_i$ are triangles of the drawing.

From the results of Goodman and Pollack \cite{GoodmanPollack1982} it follows
that Helly's theorem generalizes to pseudoconfigurations of points in two
dimensions, and thus for pseudolinear drawings.  A more general version of
Helly's theorem was shown by Bachem and Wanka \cite{BachemWanka1988}.  They
prove Helly's and Radon's theorem for oriented matroids with the
``intersection property''.  Since all oriented matroids of rank~3 have the
intersection property (cf.\ \cite{BachemWanka1988} and \cite{BachemWanka1989})
and oriented matroids of rank~3 correspond to pseudo\-configurations of
points, which in turn yield pseudolinear drawings, the two theorems are valid for
pseudolinear drawings.

\medskip

We show that Helly's theorem does not hold for cell-convex drawings,
moreover, the Helly number can be arbitrarily large in cell-convex drawings.
Note that the following proposition 
does not contradict the topological Helly theorem \cite{Helly1930} (cf.\ \cite{GoaocPPTW2017}) 
because 
in our construction the number of connected components of the intersection can grow arbitrarily large. 
More precisely, if the number of triangles $n$ is even,
the intersection of the $n/2$ triangles with even index 
has $n/2$ connected components
 (see Figure~\ref{fig:helly_counterexample} for an illustration).

\begin{proposition}
	\label{theorem:helly_fconvex}
	Helly's theorem does not generalize to cell-convex drawings. 
	Moreover, for every integer $n \ge 3$,
	there exists a cell-convex drawing of $K_{3n}$
	with Helly number at least~$n$, i.e.,
	there are $n$ triangles such that the bounded sides of any $n-1$ triangles have a common interior point, 
	but the intersection of the bounded sides of all~$n$ triangles is empty.
\end{proposition}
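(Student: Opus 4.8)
The plan is to exploit that face-convex drawings are exactly straight-line (or pseudolinear) drawings in which an arbitrary cell is declared to be the outer face. Recall from property~(4) that a drawing is face-convex precisely when some face $f_\infty$ has the property that the side of every triangle avoiding $f_\infty$ is convex; since (as noted in the preliminaries) this depends only on the weak isomorphism class and not on the choice of outer face, any straight-line drawing of $K_{3n}$ remains face-convex after we re-root it at any cell $f_o$, with $f_\infty$ still playing the role of the special face. The point of re-rooting is that the \emph{bounded side} of a triangle $T$ is the side not containing the outer face $f_o$: if $f_o$ lies outside $T$ this is simply the convex interior of $T$, but if $f_o$ lies \emph{inside} $T$ then the bounded side is the \emph{exterior} of $T$, a non-convex region which in particular contains the cell $f_\infty$ at the original infinity. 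This is exactly the non-convexity needed to defeat Helly, which is otherwise impossible for genuinely convex sets once $n \ge 4$.

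Concretely, I would take one large triangle $T_1$ together with $n-1$ triangles $T_2,\dots,T_n$ chosen so that (i) all of $T_2,\dots,T_n$ contain a common point $f_o$ lying outside $T_1$, and (ii) the triangles $T_2,\dots,T_n$ cover the interior of $T_1$ \emph{irredundantly}, meaning each $T_k$ with $k\ge 2$ contains a point of $\operatorname{int}(T_1)$ covered by no other $T_j$. A convenient model places $f_o$ far to the left and realizes $T_2,\dots,T_n$ as a fan of thin triangles emanating from near $f_o$, the $k$-th of which sweeps across a horizontal slab of $T_1$, with the $n-1$ slabs tiling $\operatorname{int}(T_1)$ and overlapping only slightly so that every slab retains a private sub-region. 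The $3n$ vertices are the corners of $T_1,\dots,T_n$; after a generic perturbation they are in general position, yielding a straight-line drawing of $K_{3n}$, and we declare the cell containing $f_o$ to be the outer face.

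It then remains to read off the bounded sides. Since $f_o\notin T_1$, the bounded side $S_1$ is $\operatorname{int}(T_1)$; since $f_o\in T_j$ for $j\ge 2$, the bounded side $S_j$ is the exterior $\RR^2\setminus T_j$, which is bounded in the re-rooted drawing and contains $f_\infty$. Hence $\bigcap_{i=1}^n S_i = \operatorname{int}(T_1)\setminus\bigcup_{j=2}^n T_j$, which is empty exactly because $T_2,\dots,T_n$ cover $\operatorname{int}(T_1)$. For the $(n-1)$-subfamilies: dropping $T_1$ leaves $\bigcap_{j\ge 2}S_j$, the complement of $\bigcup_{j\ge2}T_j$, which contains a neighborhood of $f_\infty$; dropping $T_k$ with $k\ge 2$ leaves $\operatorname{int}(T_1)\setminus\bigcup_{j\ge 2,\,j\ne k}T_j$, which is nonempty precisely because of the private sub-region of $T_k$. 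Thus every $(n-1)$-subfamily has a common interior point while the whole family does not, and since any smaller subfamily has an even larger intersection, the Helly number is exactly $n$.

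The conceptual content is light; the work that needs care is the explicit realizability. The main obstacle is to write down the fan $T_2,\dots,T_n$ so that simultaneously all of them contain the single cell $f_o$ lying outside $T_1$, they cover $\operatorname{int}(T_1)$, and the cover is irredundant, all while keeping the $3n$ points in general position (no three collinear) so that the result is a bona fide straight-line drawing of $K_{3n}$. I would handle this by giving explicit coordinates for one slab-triangle and its $n-1$ shifted copies and checking the covering and privacy conditions directly; these are open conditions, so a final generic perturbation secures general position without destroying them. I would also double-check the side identifications above by noting that the three edges of each $T_i$ are pairwise non-crossing and bound the convex triangle, so its two sides are genuinely its interior and exterior and are unaffected by the remaining edges of $K_{3n}$.
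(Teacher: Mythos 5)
Your proposal is correct and takes essentially the same route as the paper's proof: a straight-line drawing of $K_{3n}$ in which one triangle is covered irredundantly by the other $n-1$ triangles, all of which contain a common cell $f_o$ lying outside the covered triangle; re-rooting the drawing at $f_o$ makes it face-convex, turns the bounded sides of the covering triangles into closures of their exteriors, and then the covering forces the total intersection to be empty while irredundancy and the old unbounded face supply witnesses for every $(n-1)$-subfamily. The differences are cosmetic -- the paper exhibits the configuration in a figure rather than via your fan of thin wedges, and, like you, it glosses over the open-versus-closed subtlety in the covering condition (one needs the open interiors of the covering triangles to cover the closed covered triangle, which both constructions deliver).
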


\begin{proof}
	Consider a straight-line drawing $D$ of $K_{3n}$ with $n$ triangles $T_i$ as
	shown for the case $n=7$ in Figure~\ref{fig:helly_counterexample}.
	With $D'$ we denote the drawing obtained from~$D$ by making the gray
	cell $c_o$ the outer cell. 
	Let $O_i$ be the side of $\partial T_i$ that is bounded in~$D'$.
	For $1\leq i < n$ the set $O_i$
	corresponds to the outside of $\partial T_i$ in $D$  while $O_n$ corresponds
	to the inside of~$\partial T_n$.
	
	In $D'$ we have  $\bigcap_{i=1}^{n-1} O_i \neq\emptyset$,  indeed
	any point $p_n$ which belongs to the outer cell of $D$ is in this intersection.
	Since $T_n \subset \bigcup_{i=1}^{n-1} T_i$, we have $T_n \cap \bigcap_{i=1}^{n-1} O_i = \emptyset$,
	i.e., $\bigcap_{i=1}^{n} O_i = \emptyset$.
	For each
	$i \in \{1,\ldots,n-1\}$ 
	there is a point $p_i \in T_i \cap T_n$
	which is not contained in any other $T_j$. Therefore,
	$p_i \in \bigcap_{j=1; j\neq i}^n O_i$.
	
	In summary, the intersection of any $n-1$ of the $n$ sets
	$O_1,\ldots,O_n$ is non-empty but the intersection of all of them
	is empty.
\end{proof}

\begin{figure}[tb]
	\centering
        \includegraphics{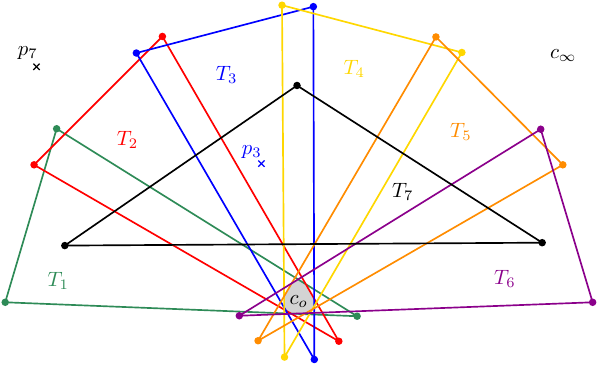}	
	\caption{
		A drawing $D$ of $K_{21}$ is obtained by adding the 
		remaining edges as straight-line segments. Making the gray
                cell $c_o$ the outer cell, we obtain a cell-convex drawing
                with Helly number~$7$.
	}
	\label{fig:helly_counterexample}  
\end{figure}

\section{Generalized Signotopes: Structure and Enumeration }
\label{sec:GS_counting}

In this section we 
discuss the connection between generalized signotopes and topological drawings.
We show that the number of generalized signotopes on $n$ elements is of order $2^{\Theta(n^3)}$.
By introducing a notion of flips
for generalized signotopes,  
we show that generalized signotopes indeed are a proper
generalization of topological drawings 
and estimate how many generalized signotopes 
can be represented by a topological drawing.  
From the known estimates for the asymptotic
number of topological drawings, it then follows that most generalized
signotopes do not come from topological drawings.

\subsection{Flip-equivalent Generalized Signotopes}

Let $\chi$ be a generalized signotope on $[n]$.
A pair $(i,j)$ of distinct elements of $[n]$ is said to be
\emph{flippable} in $\chi$ if inverting the signs of the triples containing
$i$ and $j$ yields a generalized signotope.
If $\chi$ comes from a topological drawing and $(i,j)$ is an edge incident to
the outer cell, then $(i,j)$ is flippable in $\chi$. Moreover, the generalized
signotope $\chi'$ obtained by inverting all triples containing $i$ and $j$
again comes from a drawing.  Indeed, if $D$ is a drawing corresponding to
$\chi$ and the edge $e=(i,j)$ is incident to the outer cell $c_1$, then there is a
second cell $c_2$ which is separated from $c_1$ only by $e$.  Using
stereographic projections, one can wrap the edge $e$ around the drawing to
make $c_2$ the outer cell. The drawing $D'$ obtained this way corresponds to
$\chi'$. \TypeIa and \typeIb of the topological drawings of $K_4$ (see Figure~\ref{fig:k4_three_types}) differ by such a flip operation applied to the edge
$(3,4)$.

Two generalized signotopes $\chi,\chi'$ are \emph{flip-equivalent} 
if there is a sequence $(i_1,j_1)$, \ldots, $(i_k,j_k)$ of pairs and a sequence
$\chi_0,\ldots,\chi_k$ of generalized signotopes with 
$\chi=\chi_0$, $\chi' = \chi_k$, and $\chi_\ell$ is obtained from  $\chi_{\ell-1}$
by flipping the pair $(i_\ell,j_\ell)$.
This flip-equivalence relation partitions 
the set of all generalized signotopes into \emph{flip classes}, 
which we further consider to be closed under relabeling of the elements.

In the following, we show that two weakly isomorphic drawings yield flip-equivalent generalized signotopes. In fact, the following lemma will be the key to show that most generalized signotopes do not come from topological drawings.

\begin{lemma}
	\label{lemma:weak_iso_flip_equiv}
	Two weakly isomorphic drawings 
	$D$ and $D'$ of $K_n$ 
	yield flip-equivalent generalized signotopes.
\end{lemma}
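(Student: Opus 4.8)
The plan is to reduce the statement to a few elementary transformations and treat each separately, using that flip-equivalence is an equivalence relation that is moreover closed under relabeling. Since $D$ and $D'$ are weakly isomorphic, after relabeling we may assume that they share the same underlying labeled graph and that their rotation systems either agree or are globally reversed. Passing to the sphere via stereographic projection and invoking Gioan's theorem~\cite{Gioan05}, the two spherical drawings are connected by a sequence of triangle-flips and at most one reflection; the two planar drawings may in addition differ in the choice of the outer face. By transitivity of flip-equivalence it therefore suffices to show that each of the three elementary moves — a single triangle-flip, a single change of the outer face across one edge, and one reflection — sends $\chi_D$ to a flip-equivalent generalized signotope, after which the initial relabeling is absorbed into the flip class.

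The two orientation-preserving moves are straightforward. A change of the outer face from a cell $c_1$ to a neighboring cell $c_2$ separated by a single edge $e=(i,j)$ is precisely the wrap operation discussed just before the lemma, which realizes the flip of the pair $(i,j)$; since the cells of the drawing are connected through edge crossings, an arbitrary change of the outer face is a composition of such single-edge moves, hence a sequence of pair-flips, each landing in a drawing-induced (and thus valid) generalized signotope. A triangle-flip, on the other hand, leaves $\chi$ entirely unchanged: it is a local isotopy that reroutes a portion of one edge across the crossing of two others inside a vertex-free cell, keeping all vertices and the outer face fixed. For a triple $\{a,b,c\}$, either none of the edges $ab,bc,ca$ is the rerouted edge, so the closed curve bounding the triangle is untouched, or one side is the rerouted edge but the deformation sweeps across neither a vertex nor the point at infinity and keeps the triangle a simple closed curve, so its orientation, and hence $\chi(abc)$, is preserved.

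The reflection is the crux. Reflecting the plane reverses the orientation of every triangle, so it replaces $\chi$ by its global negation $-\chi$, defined by $(-\chi)(abc)=-\chi(abc)$. The lemma thus comes down to the purely combinatorial claim that $\chi$ and $-\chi$ lie in the same flip class. I would prove this by exhibiting an explicit sequence of pair-flips realizing the global sign reversal: a set $S$ of flipped pairs negates every triple exactly when each triangle contains an odd number of pairs from $S$, and flipping all $\binom{n}{2}$ pairs satisfies this, since each triple sees all three of its pairs. The substance of the argument is to order these flips so that every intermediate map is again a generalized signotope, which I expect to establish by induction on $n$ — first negating a restriction to $n-1$ elements and then correcting the triples through the new element — or by performing the reflection on the drawing side one outer edge at a time. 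The base case $K_4$ already illustrates the mechanism: the sequence $++++ \to --++ \to ----$ realizes $\chi \mapsto -\chi$ through two valid flips.

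The main obstacle is exactly this last point: guaranteeing that the flip sequence witnessing $\chi \sim -\chi$ never leaves the class of generalized signotopes, whereas every other step in the reduction either preserves $\chi$ outright or performs a single pair-flip that is valid by construction. Once this is in hand, concatenating the three elementary moves along the transformation furnished by Gioan's theorem yields $\chi_D \sim \chi_{D'}$, completing the proof.
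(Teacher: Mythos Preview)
Your treatment of the orientation-preserving moves is exactly the paper's argument: Gioan's theorem reduces everything to triangle-flips, a triangle-flip in a bounded cell leaves $\chi$ unchanged, and a triangle-flip at the outer face is handled by first wrapping a boundary edge around---this wrap is precisely a pair-flip and lands in a drawing-induced (hence valid) generalized signotope. The paper's own proof stops right there; it invokes Gioan for triangle-flips only and does not treat the reflection as a separate step.

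You go further by isolating the reflection, and you are right that it boils down to the claim $\chi\sim-\chi$. Both of your suggested routes for this, however, have problems. The ``drawing side, one outer edge at a time'' idea cannot succeed in principle: an edge-wrap only changes which cell of the spherical drawing serves as the outer face, so it preserves the rotation system; no sequence of such flips ever produces the reversed rotations, hence never reaches $-\chi$. The combinatorial route---ordering the $\binom{n}{2}$ pair-flips so that every prefix is again a generalized signotope---is where the actual work lies, but your inductive sketch does not control the $4$-tuples mixing the new element $n$ with $[n-1]$: a flip of $(i,j)\subset[n-1]$ already perturbs every triple $\chi(i,j,n)$, so a valid flip order on $[n-1]$ need not lift to one on $[n]$. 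In summary, the part of your proposal that overlaps with the paper's proof is correct and essentially identical to it, while the reflection case you correctly flag as the crux remains open in your write-up---and, for what it is worth, it is glossed over in the paper's proof as well.
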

\begin{proof}
	According to \cite{Gioan05_full}, we can transform $D$ into
	$D'$ using triangle-flips (Figure~\ref{fig:weak_strong_isomorphism}) only. Suppose we have $D=D_0,D_1,\ldots,D_m=D'$,
	where $D_i$ is transformed into $D_{i+1}$ by a triangle-flip.  We have
	to show that $\chi(D_i)$ and $\chi(D_{i+1})$ are flip-equivalent generalized
	signotopes. A crucial point is that generalized signotopes come from
	drawings in the plane while weak isomorphism is a property of
	spherical drawings. Hence, we have to allow triangle-flips with the
	triangle being the outer cell. 
	
	Let $\chi(D_i)$ be the generalized signotope of the drawing $D_i$ and let
	$\triangle_i$ be the triangular cell in $D_i$ which is flipped to obtain
	$D_{i+1}$.  If $\triangle_i$ is a bounded cell, we are done because of
	$\chi(D_i) = \chi(D_{i+1})$. Otherwise, if $\triangle_i$ is the outer cell, 
	apply an flip of an edge bounding the outer cell to obtain an isomorphic
	drawing $D_i'$ in which another cell is the outer cell.  Because of the
	edge-flip, $\chi(D_i')$ is flip-equivalent to $\chi(D_i)$.
\end{proof}

\subsection{Small Configurations}
\label{ssec:small_configurations}


\begin{table}[tb]\centering
\advance\tabcolsep5pt
\def\arraystretch{1.2}

\begin{tabular}{ r||rrr|r }
	& Gen.Sig. 
	& Relabeling Cl. 
	& Flip Cl.
	& Weak Isom. Cl.\\
	\hline
	\hline
	3        &2                   &1       &1    &1 \\
	4        &14                  &2       &2    &2 \\
	5        &544                 &6       &3    &5 \\
	6        &173 128             &167     &16   &102 \\
	7        &630 988 832         &63 451  &442  &11 556 \\
	8        &?                   &?       &?    &5 370 725 \\    
	9        &                    &        &     &7 198 391 729 \\
	\vdots   &                    &        &     & \\
	$n$      &$2^{\Theta(n^3)}$   &$2^{\Theta(n^3)}$ &$2^{\Theta(n^3)}$ & $2^{\Theta^*(n^2)}$\\
\end{tabular}

\caption{
The first three columns show 
the number of generalized signotopes on $n$ elements,
equivalence classes up to relabeling, and flip classes, respectively.
The last column shows 
the number of weak isomorphism classes of topological drawings of $K_n$ from \cite{aafhpprsv-agdsc-15} (cf.\ \cite{Pammer2014} and \href{https://oeis.org/A276110}{OEIS/A276110}).
The asymptotic bounds 
are provided in Theorem~\ref{thm:generalized_signotopes_bound} and \cite{Kyncl2013,PachToth2006}, respectively.
}

\label{table:GS_numbers}
\end{table}


To get a better understanding of 
which generalized signotopes come from topological drawings,
we have 
enumerated all generalized signotopes 
and flip classes up to $n=7$ elements 
using a simple computer program;
see Table~\ref{table:GS_numbers}.
Moreover, since drawings from the same weak isomorphism class 
induce flip-equivalent generalized signotopes
(Lemma~\ref{lemma:weak_iso_flip_equiv}),
Table~\ref{table:GS_numbers} also restates the number of weak isomorphism classes from \cite{aafhpprsv-agdsc-15}.

In Section~\ref{ssec:prelim_GS},
we have seen that there are precisely two weak isomorphism classes of topological drawings of~$K_4$.
Via relabeling and mirroring,
all 14 generalized signotopes on $n=4$ elements are realized by \typeI and \typeII;
cf.\ Figure~\ref{fig:k4_three_types}.
These 14 generalized signotopes 
partition into two relabeling classes
and two flip classes. One of the classes corresponds to drawings of \typeI (1 crossing) and
the other one to drawings of \typeII (0 crossings).
In particular, the flip operation for generalized signotopes preserves the number of crossings in a 4-tuple.
Therefore, we can define the \emph{crossing number} of a generalized signotope $\chi$ on $n$ elements 
as the number of induced 4-tuples which belong to the flip class of the \typeI drawing.

\begin{figure}[tb]
	\centering
	\includegraphics[]{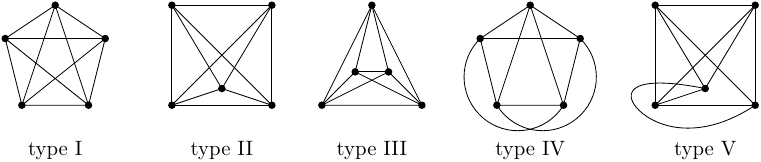}
	\caption{The five types of topological drawings of $K_5$.}
	\label{fig:n5_fivetypes}  
\end{figure}

For $n=5$,
there are 544 generalized signotopes,
which belong to 6 relabeling classes and 3 flip classes, respectively.
There are five weak isomorphism classes of topological drawings of $K_5$, 
see Figure~\ref{fig:n5_fivetypes}.
We have verified by computer that each of the 544 generalized signotope on $n=5$ elements 
is realized by a topological drawing of~$K_5$. 
Since we can read
whether a 4-tuple of vertices induces a crossing from the generalized signotope,
it is clear that
drawings with different number of crossings do not correspond to a common class
of generalized signotopes.
Indeed, the class with 24 generalized signotopes
corresponds to type~I  and type~V  (both 5 crossings), 
the class with 280 generalized signotopes
corresponds to type~II and type~IV (both 3 crossings), 
and the class with 240 generalized signotopes
corresponds to type~III  (1 crossing).
We conclude that generalized signotopes are not able to encode
the weak isomorphism class. 
Also convexity is not encoded:
type~I and type~V 
induce the same generalized signotope but type~I is cell-convex
while type~V is non-convex. 

For $n=6$,
there are 173\,128 generalized signotopes,
167 relabeling classes, and 
16 flip classes.
We have verified by computer  
that 151 of the 167 relabeling classes 
are realized by a topological drawing of~$K_6$.
However, from each of the 16 flip classes 
there is a representative which can be realized by a topological drawing of $K_6$.

The non-realizable generalized signotopes on $n=6$
belong to three flip classes, which have 3, 4, and 5 crossings, respectively.
Note that there is a unique flip class with 3 crossings, 
a unique flip class with 4 crossings,
and two flip classes with 5 crossings.

We now consider the flip class $F$ 
of generalized signotopes on $n=6$ elements with 3 crossings.
There is, up to strong isomorphism, a unique
topological drawing $D$ of $K_6$ 
which has the minimum of 3 crossings;
see Figure~\ref{fig:n6_crossingmin}. 
Therefore, every drawing realizing a generalized signotope from $F$ is isomorphic to~$D$.
Since the drawing $D$ is highly symmetric,
there are up to isomorphism only 3 choices for the outer cell,
and hence only 3 of the 10 generalized signotopes from the flip class $F$ are realized; cf.\ Listing~\ref{lst:gensig6real}.
The remaining 7 generalized signotopes of that flip class are not realizable; cf.\ Listing~\ref{lst:gensig6nonreal}.
Note that in Listings~\ref{lst:gensig6real} and~\ref{lst:gensig6nonreal}
we encode a generalized signotope $\chi$ on the elements $\{1,\ldots,6\}$
only by its $+$-triples, that is, the pre-image $\chi^{-1}(+)$.

\begin{lstlisting}[caption={Three realizable generalized signotopes 
on the elements $\{1,2,3,4,5,6\}$ from the flip class~$F$
encoded by its $+$-triples.},label={lst:gensig6real}]
{235,236,245,246,345,346,356,456}
{235,236,245,246,256,345,346,356}
{234,235,245,246,256,346,356,456}
\end{lstlisting}

\begin{lstlisting}[caption={Seven non-realizable generalized signotopes  
on the elements $\{1,2,3,4,5,6\}$ from the flip class~$F$ 
encoded by its $+$-triples.},label={lst:gensig6nonreal}]
{234,235,236,245,256,346,356,456}
{234,235,236,246,256,345,356,456}
{234,235,246,256,345,346,356,456}
{136,234,245,256,345,456}
{234,236,245,246,256,345,356,456}
{234,236,245,256,345,346,356,456}
{235,236,245,246,256,345,346,456}
\end{lstlisting}

\begin{figure}[htb]
  \centering
  \includegraphics[]{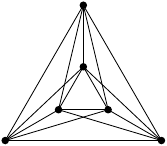}
  \caption{The unique topological drawing of $K_6$ 
  which has the minimum of 3~crossings.}
  \label{fig:n6_crossingmin}  
\end{figure}

To lift the non-representable examples to higher number of elements we
use the all-plus-extension of a generalized signotope.

\begin{lemma}[All-plus-extension]\label{lemma:all-plus-extension}
Let $\chi$ be a generalized signotope on $n$ elements and let $n' \ge n$ be an integer.
Then the mapping $\chi': [n']_3 \to \{+,-\}$ with
	\[
	\chi'(x,y,z) = 
	\begin{cases}
	\chi(x,y,z) &\text{ if $x,y,z \in [n]$}\\
	+ &\text{ otherwise.}\\
	\end{cases}
	\]  
is a generalized signotope on $n'$ elements.
\end{lemma}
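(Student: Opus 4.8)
The plan is to verify directly that $\chi'$ satisfies the defining condition of a generalized signotope. First I would record that $\chi'$ is alternating: it agrees with the alternating map $\chi$ on all triples whose elements lie in $[n]$, and on the remaining triples it is obtained by assigning $+$ to the sorted representative and extending via $\chi'(i_{\sigma(1)},i_{\sigma(2)},i_{\sigma(3)}) = \sgn(\sigma)\cdot\chi'(i_1,i_2,i_3)$, so alternation holds by construction. By the symmetry property it then suffices to check, for every $i<j<k<l$ in $[n']$, that the sequence $\chi'(ijk),\chi'(ijl),\chi'(ikl),\chi'(jkl)$ has at most two sign changes, i.e.\ is neither $+-+-$ nor $-+-+$.

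I would split into two cases according to whether the quadruple lies in $[n]$. If $i,j,k,l\in[n]$, then all four entries are values of $\chi$, and the desired condition is exactly the generalized-signotope condition for $\chi$, which holds by hypothesis.

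Otherwise at least one of the four elements exceeds $n$; since $i<j<k<l$, the largest element satisfies $l>n$. The key observation is that $l$ is contained in precisely three of the four triples, namely $ijl$, $ikl$, and $jkl$, so each of these is a triple not fully contained in $[n]$ and hence has $\chi'$-value $+$. Consequently the sequence has the form $(s,+,+,+)$ with $s=\chi'(ijk)\in\{+,-\}$, which exhibits at most one sign change. In particular the forbidden patterns do not occur, completing this case and the proof.

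I do not expect a serious obstacle here, as both cases are immediate. The only points requiring care are interpreting the definition of $\chi'$ on unsorted triples correctly (it must be read through its alternating extension of the sorted values, not literally as ``$+$ on every ordering''), and the small combinatorial observation that the unique maximal-and-new element $l$ lies in exactly three of the four sub-triples, which is precisely what collapses the sequence to $(s,+,+,+)$ and forces at most one sign change.
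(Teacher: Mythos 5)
Your proof is correct and follows essentially the same route as the paper's: split on whether all four elements lie in $[n]$, invoke the hypothesis on $\chi$ in the first case, and observe in the second case that the three triples containing the largest (new) element all receive $+$, so the sorted sequence is $(s,+,+,+)$ and the forbidden patterns cannot occur. Your explicit handling of the alternating extension and the reduction to sorted quadruples via the symmetry property is a touch more careful than the paper's one-line treatment, but it is the same argument.
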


\begin{proof}
Consider four elements $x,y,z,w \in [n']$.
If $x,y,z,w \in [n]$,
then the sequence $\chi'(xyz),\allowbreak{}\chi'(xyw),\allowbreak{} \chi'(xzw),\allowbreak{} \chi'(yzw)$ avoids 
the forbidden patterns $+-+-$ and $-+-+$ 
because $\chi$ is a generalized signotope.
Otherwise,
the sequence $\chi'(xyz),\allowbreak{}\chi'(xyw),\allowbreak{} \chi'(xzw), \allowbreak{}\chi'(yzw)$
contains at least three $+$-entries.
\end{proof}

\begin{corollary}
For $n \le 5$ all generalized signotopes on $n$ elements 
are realizable as topological drawing of $K_n$.
For $n \ge 6$ there exist non-realizable generalized signotopes on $n$ elements.
\end{corollary}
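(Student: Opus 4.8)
The plan is to handle the two regimes separately. For $n \le 5$ the statement merely records the outcome of the enumeration in Section~\ref{ssec:small_configurations}: the $2$, $14$, and $544$ generalized signotopes on $3$, $4$, and $5$ elements were each checked to arise from a topological drawing, so I would simply cite that verification. The content lies in the second assertion, and its engine is the all-plus-extension of Lemma~\ref{lemma:all-plus-extension}.

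For $n \ge 6$ I would fix, once and for all, a non-realizable generalized signotope $\chi$ on the ground set $\{1,\dots,6\}$, taking for instance any entry of Listing~\ref{lst:gensig6nonreal} as a seed. Given $n \ge 6$, let $\chi'$ be its all-plus-extension to $[n]$; Lemma~\ref{lemma:all-plus-extension} guarantees that $\chi'$ is again a generalized signotope, and by construction $\chi'(x,y,z) = \chi(x,y,z)$ whenever $x,y,z \in \{1,\dots,6\}$, i.e.\ $\chi'$ restricts to $\chi$ on $\{1,\dots,6\}$. (For $n=6$ the extension is $\chi$ itself, so this case reduces directly to Listing~\ref{lst:gensig6nonreal}.) It remains to see that $\chi'$ is not realizable.

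The key step is a \emph{restriction principle}: if a topological drawing $D$ of $K_n$ induces a generalized signotope $\psi$ (Proposition~\ref{proposition:topdraw_induces_gensig}), then for any $T \subseteq [n]$ the subdrawing $D[T]$ induced on $T$ is again a topological drawing and induces exactly the restriction $\psi|_T$. This is immediate from the definitions: deleting vertices together with their incident edges preserves the three defining properties of a topological drawing, and the orientation $\chi(abc)$ is read off locally from the three edges of the triangle on $\{a,b,c\}$, so it is unchanged by the removal of any other vertices. Granting this, suppose towards a contradiction that $\chi'$ were realized by a topological drawing $D$ of $K_n$. Restricting to $T = \{1,\dots,6\}$, the subdrawing $D[T]$ is a topological drawing of $K_6$ inducing $\chi'|_T = \chi$, so it would realize the seed $\chi$ -- contradicting its non-realizability. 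Hence $\chi'$ is a non-realizable generalized signotope on $n$ elements, as required.

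I do not foresee a serious obstacle: the first assertion is an already-completed computation and the second is a short reduction. The only point requiring care is the restriction principle, and specifically the observation that the sign $\chi(abc)$ depends only on the embedding of the three edges of the triangle on $\{a,b,c\}$ -- which is precisely the local description of $\chi$ given in Section~\ref{ssec:prelim_GS} -- so that it survives the deletion of the remaining vertices intact.
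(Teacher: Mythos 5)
Your proposal is correct and follows essentially the same route as the paper: the first assertion is the recorded computer verification, and the second takes a non-realizable $6$-element generalized signotope from Listing~\ref{lst:gensig6nonreal}, forms its all-plus-extension via Lemma~\ref{lemma:all-plus-extension}, and concludes non-realizability because the extension contains the seed as an induced subconfiguration. Your explicit spelling-out of the restriction principle (induced subdrawings of topological drawings are topological drawings inducing the restricted signotope, since triangle orientations are determined locally) is just the detailed justification of the paper's phrase ``contains $\chi$ as an induced subconfiguration.''
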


\begin{proof}
The first part follows from earlier discussions in this subsection.

For the second part,
consider the non-realizable generalized signotope $\chi$ on $6$ elements from above.
Now, for every integer $n'$ with $n' \ge 6$,
the all-plus-extension of $\chi$ (Lemma~\ref{lemma:all-plus-extension})
is also non-realizable since it contains $\chi$ as an induced subconfiguration. 
\end{proof}

Another interesting example is the generalized signotope on $n=7$ elements
shown in Listing~\ref{lst:gensig7cr7}.  This configuration is not
representable by any topological drawing of~$K_7$ because it has crossing
number~7 while every drawing of $K_7$ has at least~9 crossings \cite{Guy1972}.

\begin{lstlisting}[caption={A generalized signotope
on the elements $\{1,2,3,4,5,6,7\}$ with only 7~crossings
encoded by its $+$-triples.},label={lst:gensig7cr7}]
{235,236,237,245,246,247,257,267,345,346,347,356,367,456,457,567}
\end{lstlisting}

It would be interesting to have non-trivial bounds for the minimum number of crossings
of generalized signotopes on $n$ elements.

\subsection{The Asymptotic Number}
\label{ssec:num-gs}

In this subsection we show that 
the number $g(n)$ of generalized signotopes on $n$ elements
is of order $2^{\Theta(n^3)}$.
This bound also applies to the numbers of 
relabeling classes and flip classes, respectively,
because reflections and relabelings only give a factor of at most 
$2\cdot n!$
and the number of elements in a flip class is at most $2^{\binom{n}{2}}$.
Last but not least, we show that most generalized signotopes are not induced by a topological drawing.

\paragraph{Upper Bound for $g(n)$:}
    To eventually show that 
    $g(n) \le g(t)^{\binom{n}{t}/\binom{n-3}{t-3}}$ is an upper bound on the number of generalized signotopes on $n$ elements,
    we make use of Shearer's Entropy Lemma \cite{ChungGFS1986}. 
	
	\begin{lemma}[Shearer's Entropy Lemma, \cite{ChungGFS1986}]
	\label{lemma:shearer_entropy_lemma}
    Let $S$ be a finite set 
    and let $A_1,\ldots,A_m$ be subsets of $S$ such that
    every element of $S$ is contained in at least $k$ of the sets $A_1,\ldots,A_m$.
    If~$\FF$ is a collection of subsets of $S$ and 
    $\FF_i = \{F \cap A_i \colon F \in \FF \}$ for $1 \le i \le m$.
    Then
    \[
     |\FF|^k \le \prod_{i=1}^m |\FF_i|.
    \]
	\end{lemma}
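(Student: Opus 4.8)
The plan is to prove this by the entropy method, deducing the combinatorial inequality from its information-theoretic counterpart. Let $F$ be a uniformly random element of $\FF$, and regard it through its indicator vector $(X_s)_{s \in S} \in \{0,1\}^S$, so that $X_s = 1$ precisely when $s \in F$. Writing $H$ for Shannon entropy in base $2$ and abbreviating the whole tuple $(X_s)_{s\in S}$ by $X_S$, uniformity gives $H(X_S) = \log|\FF|$. For each $i$, the restriction $X_{A_i} = (X_s)_{s \in A_i}$ is exactly the indicator vector of $F \cap A_i$, which takes values in $\FF_i$; since the entropy of a random variable is at most the logarithm of its support size, $H(X_{A_i}) \le \log|\FF_i|$. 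Hence it suffices to prove the entropy inequality
\[
k \cdot H(X_S) \le \sum_{i=1}^m H(X_{A_i}),
\]
since combining it with $H(X_S)=\log|\FF|$ and $H(X_{A_i})\le\log|\FF_i|$ and exponentiating base $2$ yields $|\FF|^k\le\prod_{i=1}^m|\FF_i|$.

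To establish the entropy inequality I would fix an arbitrary linear order on $S$ and, for $s\in S$, let $X_{<s}$ denote the collection of variables indexed by elements strictly preceding $s$. The chain rule for entropy gives $H(X_S)=\sum_{s\in S}H(X_s \mid X_{<s})$. Applying the chain rule to each $A_i$, with its elements listed in the induced order, and using that conditioning on a subset of variables can only increase entropy, I obtain
\[
H(X_{A_i}) = \sum_{s\in A_i} H\bigl(X_s \mid X_{A_i\cap\{t : t<s\}}\bigr) \ge \sum_{s\in A_i} H(X_s \mid X_{<s}),
\]
where the inequality holds because $A_i\cap\{t:t<s\}$ is a subset of $\{t:t<s\}$.

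Summing the last display over $i$ and interchanging the order of summation produces the bound $\sum_{i=1}^m H(X_{A_i}) \ge \sum_{s\in S}\lvert\{i : s\in A_i\}\rvert \, H(X_s \mid X_{<s})$. The covering hypothesis guarantees that each coefficient $\lvert\{i : s\in A_i\}\rvert$ is at least $k$, and since every conditional entropy $H(X_s\mid X_{<s})$ is nonnegative, the right-hand side is at least $k\sum_{s\in S}H(X_s\mid X_{<s}) = k\,H(X_S)$. This is precisely the desired entropy inequality, completing the argument.

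The two ingredients I am leaning on — the chain rule and the monotonicity \emph{conditioning reduces entropy} — are standard, so there is no genuine obstacle. The only point needing care is keeping the per-element conditioning sets aligned between the full ground set and each $A_i$; this is handled cleanly by fixing one global order and comparing $A_i\cap\{t : t<s\}$ with $\{t : t<s\}$ elementwise. The support bound $H(X_{A_i})\le\log|\FF_i|$ is the single place where the combinatorial quantities $|\FF_i|$ re-enter, and notably only the inequality is needed there, not uniformity of $F\cap A_i$.
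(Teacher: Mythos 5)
Your proof is correct, but note that the paper itself does not prove this lemma at all: it is imported as a black box, with the citation to Chung--Frankl--Graham--Shearer standing in for the proof, so there is no in-paper argument to compare against. What you give is the standard information-theoretic proof of Shearer's lemma, and every step is sound: the identification $H(X_S)=\log|\FF|$ (uniformity), the support bound $H(X_{A_i})\le\log|\FF_i|$, the chain-rule decomposition along a fixed linear order, the monotonicity step $H\bigl(X_s \mid X_{A_i\cap\{t:t<s\}}\bigr)\ge H(X_s\mid X_{<s})$ (conditioning on a superset of variables can only decrease entropy), and finally the double counting of coefficients $|\{i: s\in A_i\}|\ge k$ combined with nonnegativity of conditional entropy. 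The only pedantic caveat is that the entropy argument presupposes $\FF\neq\emptyset$ so that the uniform random variable exists; when $\FF=\emptyset$ (or $k=0$) the inequality is trivial, so this costs nothing. Your closing remark is also the right one to emphasize: only the support-size bound, not uniformity of $F\cap A_i$, is needed on the $\FF_i$ side, which is exactly what makes the lemma applicable in the paper's counting of generalized signotopes, where the projections $\FF_I$ are families of generalized signotopes on $t$ elements of unknown internal structure.
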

    
        Let $t \leq n$.  We consider the set $S = \binom{[n]}{3}$ of all triples
        from $[n]$ and, for each $t$-subset~$I$ of~$[n]$,
        let $A_I = \binom{I}{3}$ be the set of triples of~$I$.  There are
        $m=\binom{n}{t}$ choices for $I$ and as many sets $A_I$.  Each triple
        in $S$ belongs to $k=\binom{n-3}{t-3}$ sets~$A_I$.
    
        A generalized signotope on $n$ elements is uniquely encoded by its
        $+$-triples, which form a subset of~$S$.  Let $\FF$ be the family of
        all generalized signotopes on $n$ elements given by their $+$-triples.  For every
        $I$, let $\FF_I = \{F \cap A_I \colon F \in \FF \}$. Note that $\FF_I$
        is a family of generalized signotopes on $I$, whence
        $|\FF_I| \leq g(t)$.

    
    Lemma~\ref{lemma:shearer_entropy_lemma} 
    implies 
    \[
        g(n)^k = |\FF|^k \le \prod_{I \in \binom{[n]}{t}} |\FF_I| \le g(t)^m,
      \]
      
    with $m=\binom{n}{t}$ and $k=\binom{n-3}{t-3}$. Therefore,
    \[
        g(n) \le g(t)^{m/k} = 2^{c(t)\binom{n}{3}}
    \quad
     \text{with}
    \quad   
        c(t) = {\log_2(g(t))}/{\binom{t}{3}}.
    \]
    
    Using $g(7)=630\ 988\ 832$ (cf.\ Table~\ref{table:GS_numbers}),
	we obtain that
	the number $g(n)$ of generalized signotopes on $n$ elements 
	is at most $2^{c_2 \cdot \binom{n}{3}}+o(1)$ 
	where $c_2=c(7) \approx 0.8352$. 

Note that the above shows that $c(n)\leq c(t)$, that is, $c$ is non-increasing. Thus, the factor $c_2$ can be expected to decrease if a value of $c(t')$ with $t' > 7$ becomes available.

	\paragraph{Lower Bound for $g(n)$:}
	First, we give a recursive construction of a set $\XX_{3n}$ 
	of generalized signotopes on $3n$ elements.
	The set $\XX_3$ consists of the two generalized signotopes on $\{1,2,3\}$.
	
	For the step, we construct $\XX_{3n}$ based on $\XX_{n}$:
	Let $A=\{1,\ldots,n\}$, $B=\{n+1,\ldots,2n\}$, and $C=\{2n+1,\ldots,3n\}$.
	Pick three generalized signotopes $\chi_A$, $\chi_B$, $\chi_C$ from $\XX_{n}$ 
	and an arbitrary mapping $M:A \times B \times C \to \{+,-\}$. 
	We define $\chi$ by the following rule: 
	for $x<y<z$ we set
	\[
	\chi(x,y,z) = 
	\begin{cases}
	\chi_A(x,y,z) &\text{ if $x,y,z \in A$}\\
	\chi_B(x,y,z) &\text{ if $x,y,z \in B$}\\
	\chi_C(x,y,z) &\text{ if $x,y,z \in C$}\\
	M(x,y,z) &\text{ if $x \in A, y \in B,z \in C$}\\
	+ &\text{ otherwise.}\\
	\end{cases}
	\]  
	An easy case distinction shows that $\chi$ is a generalized signotope on $n$ elements: 
	For any four elements $x<y<z<w$, at least two are from the same class $S\in\{A,B,C\}$.
	We look at the signs of the sequence $xyz,xyw,xzw,yzw$.

        If all four elements are from $S$, then we use that $\chi_S$ is a generalized signotope.
        If exactly three of the elements are from $S$, then there are at least three $+$ signs
        in the sequence, whence, the forbidden patterns $+-+-$ and $-+-+$ do not occur.
	Now if exactly two of the elements are from $S$, then
        if the two elements are $x,y$ the triples $xyz$ and $xyw$ map to plus and we have $++**$, 
	where $*\in \{+,-\}$ is arbitrary,
	if $y,z$ are from $S$, we have $+**+$, and if $z,w$ are from $S$, we have $**++$.
	In any case, the forbidden patterns $+-+-$ and $-+-+$ cannot occur, 
	and hence $\chi$ is a generalized signotope.
	
	Since there are $|\XX_{n}|^3 \cdot 2^{n^3}$ possibilities to choose $\chi_A,\chi_B,\chi_C,M$, 
	and no two such selections yield the same $\chi$,
	we have 
	\[
	|\XX_{3n}| = |\XX_{n}|^3 \cdot 2^{n^3}.
	\]

Now, using all-plus-extensions (cf.\ Lemma~\ref{lemma:all-plus-extension}), 
we obtain sets $\XX_{3n+1}$ and $\XX_{3n+2}$
of generalized signotopes on $3n+1$ and $3n+2$ elements, respectively,
with $|\XX_{3n}|=|\XX_{3n+1}|=|\XX_{3n+2}|$.
Hence,  for $f(n) = \log_2 |\XX_n|$ we have 
\[
f(n) = 3f(\lfloor n/3 \rfloor)+\lfloor n/3 \rfloor^3.
\]
Inductively assuming 
$
f(n) \ge \frac{1}{24}n^3 - \frac{3}{8}n^2
$,
which is easy to check for $n=1$ and $n=2$,
we obtain 
\begin{align*}
f(n) 
& = 3f \left( \left\lfloor \frac{n}{3} \right\rfloor \right)+ \left\lfloor \frac{n}{3} \right\rfloor^3 
\\
& \ge
3\left( \frac{1}{24} \cdot \left\lfloor \frac{n}{3} \right\rfloor^3 - \frac{3}{8} \cdot \left\lfloor \frac{n}{3} \right\rfloor^2 \right) 
+\left\lfloor \frac{n}{3} \right\rfloor^3 
\\
& \ge
3\left( \frac{1}{24} \cdot \left(\frac{n-2}{3}\right)^3 - \frac{3}{8} \cdot \left(\frac{n-2}{3}\right)^2 \right) 
+\left(\frac{n-2}{3}\right)^3 
\\
& =
\frac{1}{24} n^3
- \frac{3}{8}n^2
+ n
- \frac{5}{6}
\ 
\ge
\ 
\frac{1}{24} n^3
- \frac{3}{8}n^2
\end{align*}
for every $n\ge 3$. \goodbreak

We summarize the results in the following theorem.

\begin{theorem}\label{thm:generalized_signotopes_bound}
	The number $g(n)$ of generalized signotopes on $n$ elements is between 
	$2^{c_1 \cdot \binom{n}{3}+o(n^3)}$ 
	and 
	$2^{c_2 \cdot \binom{n}{3}}+o(1)$ 
	for constants $c_1 = 0.25$ and $c_2 \approx 0.8352$.
\end{theorem}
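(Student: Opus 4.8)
The plan is to establish the two bounds independently, since they draw on completely different techniques: the upper bound is a counting argument driven by Shearer's Entropy Lemma, while the lower bound is an explicit recursive product construction. Both are asymptotically of the shape $2^{c\binom{n}{3}}$, so the work lies in pinning down the two constants $c_1=1/4$ and $c_2\approx 0.8352$.

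For the upper bound I would take $S=\binom{[n]}{3}$ and, for each $t$-subset $I\subseteq[n]$, the set $A_I=\binom{I}{3}$. Encoding each generalized signotope by its set of $+$-triples identifies the family $\FF$ of all generalized signotopes on $[n]$ with a collection of subsets of $S$. The key observation is that restricting a generalized signotope to $I$ is again a generalized signotope, because the forbidden patterns $+-+-$ and $-+-+$ are tested only on $4$-tuples; hence the defining property is hereditary, and every projected family $\FF_I=\{F\cap A_I:F\in\FF\}$ satisfies $|\FF_I|\le g(t)$. Each triple lies in exactly $k=\binom{n-3}{t-3}$ of the $m=\binom{n}{t}$ sets $A_I$, so Shearer's Entropy Lemma yields $g(n)^k\le g(t)^m$. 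The arithmetic simplification $m/k=\binom{n}{3}/\binom{t}{3}$ then gives $g(n)\le 2^{c(t)\binom{n}{3}}$ with $c(t)=\log_2(g(t))/\binom{t}{3}$; substituting the computed value $g(7)=630\,988\,832$ delivers $c_2=c(7)\approx 0.8352$.

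For the lower bound I would build a large family $\XX_{3n}$ recursively. Partitioning $[3n]$ into blocks $A,B,C$ of size $n$, I place an arbitrary generalized signotope from $\XX_n$ on each block, assign arbitrary signs to the $n^3$ ``rainbow'' triples with one vertex in each block, and set all remaining mixed triples to $+$. The heart of the argument is checking that this always produces a generalized signotope: for any four elements at least two lie in a common block, and a short case analysis (according to whether a block contains two, three, or four of them) shows the induced sign sequence on each $4$-tuple either carries at least three $+$'s or is governed by a genuine sub-signotope, so neither forbidden pattern appears. This gives $|\XX_{3n}|=|\XX_n|^3\cdot 2^{n^3}$, and all-plus-extension (Lemma~\ref{lemma:all-plus-extension}) carries the construction to $3n+1$ and $3n+2$ without changing the count. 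Writing $f(n)=\log_2|\XX_n|$ and solving $f(n)=3f(\lfloor n/3\rfloor)+\lfloor n/3\rfloor^3$ by induction against the ansatz $f(n)\ge\frac{1}{24}n^3-\frac38 n^2$ yields the claim with $c_1=1/4$, since $\frac{1}{24}n^3=\frac14\binom{n}{3}+O(n^2)$.

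I expect the main obstacle to be the correctness of the product construction. The ``mostly $+$'' filling is easy to guess, but one must verify it never creates a forbidden alternation on any of the three essentially different types of $4$-tuple. The saving grace is that $+-+-$ and $-+-+$ each require exactly two $+$'s in a fixed alternating position, and the product rule always forces either at least three $+$'s or a legitimate sub-signotope on the relevant triples, excluding this. A secondary but essential check is the identity $\binom{n}{t}/\binom{n-3}{t-3}=\binom{n}{3}/\binom{t}{3}$ underlying the upper bound, together with confirming that $c(t)$ is non-increasing, which both justifies the specific choice $t=7$ and shows the constant $c_2$ would improve once any $g(t)$ with $t>7$ becomes available.
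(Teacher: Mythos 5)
Your proposal is correct and follows essentially the same route as the paper's proof: the upper bound via Shearer's Entropy Lemma applied to the $+$-triple encoding with the projections onto $t$-subsets and the choice $t=7$, $c_2=\log_2(g(7))/\binom{7}{3}$, and the lower bound via the identical three-block recursive construction (independent signotopes on the blocks, arbitrary signs on rainbow triples, all other mixed triples set to $+$), the same recursion $f(n)=3f(\lfloor n/3\rfloor)+\lfloor n/3\rfloor^3$, and the same induction hypothesis $f(n)\ge \frac{1}{24}n^3-\frac{3}{8}n^2$. One minor wording caveat: your dichotomy ``at least three $+$'s or a genuine sub-signotope'' does not cover the case where exactly two of the four elements share a block and the remaining two lie in distinct blocks, where the sign sequence is $++{*}{*}$, $+{*}{*}+$, or ${*}{*}++$ and may contain only two $+$'s; these $+$'s sit in non-alternating positions, however, so the forbidden patterns $+-+-$ and $-+-+$ are still excluded, exactly as in the paper's case analysis.
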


Last but not least,
we investigate how many generalized signotopes come from topological drawings.
There are at most $2^{\tilde{O}(n^2)}$ weak isomorphism classes of drawings of the complete graph $K_n$
\cite{Kyncl2013} (cf.\ \cite{PachToth2006}) and, 
by Lemma~\ref{lemma:weak_iso_flip_equiv},
each weak isomorphism classes is contained in a flip-equivalence class of generalized signotopes.
Since the number of generalized signotopes in a flip-equivalence class
is at most $ 2^{\binom{n}{2}}$,
we conclude that
at most $2^{\tilde{O}(n^2)} \cdot 2^{n \choose 2}=2^{\tilde{O}(n^2)}$ 
generalized signotopes come from topological drawings of~$K_n$.

\section{Discussion} 
\label{sec:Discussion}

We conclude this article 
with remarks on three additional classic theorems from Convex Geometry. 

Lov\'asz (cf.\ B{\'a}r{\'a}ny \cite{Barany1982}) 
generalized Helly's theorem as follows:
Let $\mathcal{C}_0,\ldots,\mathcal{C}_d$ be families of compact convex sets from $\RR^d$ 
such that for every ``colorful'' choice of sets $C_0 \in \mathcal{C}_0,\ldots,C_d \in \mathcal{C}_d$
the intersection $C_0\cap \ldots \cap C_d$ is non-empty.
Then, for some $k$, the intersection $\bigcap \mathcal{C}_k$ is non-empty.
This result is known as the \emph{Colorful Helly theorem}.
Kalai and Meshulam \cite{KalaiMeshulam2005} presented a topological version of
the Colorful Helly theorem, which, in particular, carries over to pseudolinear
drawings.  Since Helly's theorem does not generalize to cell-convex drawings
(cf.\ Proposition~\ref{theorem:helly_fconvex}), 
neither does the Colorful Helly theorem.

The \emph{$(p,q)$-Theorem} 
(conjectured by Hadwiger and Debrunner, 
proved by Alon and Kleitman \cite{AlonKleitman1992}, 
cf.\ \cite{KellerSmorodinskyTardos2018})
says that for any $p \ge q \ge d+1$ 
there is a finite number $c(p,q,d)$ with the
following property:
If $\mathcal{C}$ is a family of convex sets in $\RR^d$, 
with the property that among any $p$ of them,
there are $q$ that have a common point, 
then there are $c(p,q,d)$ points that cover all the sets in~$\mathcal{C}$.
Helly's theorem is the case with $p=q=d+1$, i.e., $c(d+1,d+1,d)=1$.
We are not aware whether a $(p,q)$-Theorem for triangles in general topological drawings exists, however,
an anonymous reviewer pointed us to a proof for pseudolinear drawings.
Here is an outline:
A triangle in a pseudolinear drawing is the intersection of three pseudo-halfplanes. Hence, the intersection of multiple triangles is the intersection of pseudo-halfplanes, and is therefore either empty or path-connected.
A $(p,q)$-Theorem for triangles in pseudolinear drawings now follows directly from Pat{\'a}kov{\'a}'s $(p,q)$-Theorem \cite[Theorem~6]{Patakova2020}. 

Last but not least, we would like to mention 
\emph{Tverberg's theorem}, which asserts that 
every set $V$ of at least $(d+1)(r-1)+1$ points in $\RR^d$
can be partitioned into $V=V_1 \mathbin{\dot{\cup}} \ldots \mathbin{\dot{\cup}} V_r$
such that $\conv(V_1) \cap \ldots \cap \conv(V_r)$ is non-empty.
A generalization of Tverberg's theorem applies to pseudolinear drawings~\cite{Roudneff1988},
to drawings of $K_{3r-2}$ if $r$ is prime \cite{BaranyiShlosmanSzucs1981} and if $r$ is a prime-power \cite{Ozaydin1987}. 
In particular, every drawing of the $K_4$ provides a partition into two sets with a common intersection. This shows that Radon's theorem holds for topological drawings. 
Also a generalization of Birch's theorem, a weaker version of Tverberg's theorem,
was recently proven for topological drawings of complete graphs \cite{FrickSoberon2020}.
The general case, however, remains unknown.
For a recent survey on generalizations of Tverberg's theorem,
we refer to \cite{BaranySoberon2017}.

Besides the mentioned characterization of pseudolinear drawings \cite[Theorem~3.2]{BalkoFulekKyncl2015},
Balko, Fulek, and Kyn\v{c}l also
provide a characterization of
which generalized signotopes
can be drawn as $x$-monotone topological drawings
and which can be drawn as $x$-monotone semisimple drawings
by forbidding finitely many subconfigurations
\cite[Theorem~3.1]{BalkoFulekKyncl2015}.
In the spirit of their results,
Kyn\v{c}l's theorem \cite{Kyncl2020},
and the hereditary-convex and convex classification by Arroyo et al.\ \cite{ArroyoMRS2017_convex},
we pose the following question on characterizing drawable generalized signotopes: 
\begin{question}
	Is there a finite number $k$ such that,
	given any generalized signotope, if all $k$-tuples are drawable,
	then the generalized signotope is drawable? 
\end{question}

\small
\bibliographystyle{alphaabbrv-url}
\bibliography{references}

\end{document}